\documentclass[final]{siamltex}

\usepackage{amsmath,amssymb,amsfonts}
\usepackage{enumerate}
\usepackage{picinpar}
\usepackage{layout}
\usepackage{verbatim}
\usepackage{epsfig}
\usepackage{graphicx}
\usepackage{pgf}
\newtheorem{remark}{Remark}
\newtheorem{example}[theorem]{Example}
%\numberwithin{figure}{section}
%

\newcounter{fig}

\def \eqdef {\overset{\text{\tiny def}}{=}}

\def\R{\mathbb R}
\def\Z{\mathbb Z}

\def\C{\mathcal C}
\def\S{\mathcal S}
\def\Re{\mathcal R}
\def\K{\mathcal K}

\newcommand{\ka}{\kappa}

%%%%%

\title{A proof of the Global Attractor Conjecture in the single linkage class case}
\author{David F. Anderson$^{1}$}

\begin{document}

\maketitle

\footnotetext[1]{Department of Mathematics, University of Wisconsin, Madison,
  WI, 53706.  Grant support from NSF grant DMS-1009275}

\begin{abstract} 
  This paper is concerned with the dynamical properties of deterministically modeled chemical reaction systems.  Specifically, this paper provides a proof of the Global Attractor Conjecture in the setting where the underlying reaction diagram consists of a single linkage class, or connected component.  The conjecture dates back to the early 1970s and is the most well known and important open problem in the field of chemical reaction network theory.  The resolution of the conjecture has important biological and mathematical implications in both the deterministic and stochastic settings.  One of our main analytical tools, which is introduced here, will be a method for partitioning the relevant monomials of the dynamical system along sequences of trajectory points into classes with comparable growths.  We will use this method to conclude that if a trajectory converges to the boundary, then a whole \textit{family} of Lyapunov functions decrease along the trajectory. This will allow us  to overcome the fact that the usual Lyapunov functions of chemical reaction network theory are bounded on the boundary of the positive orthant, which has been the technical sticking point to a proof of the Global Attractor Conjecture in the past.  
\end{abstract}

  \begin{keywords} persistence, global stability, population processes, chemical reaction systems, 
  mass-action kinetics, deficiency, complex-balancing, 
  detailed-balancing
\end{keywords}

\begin{AMS}
37C10, 80A30, 92C40, 92D25
\end{AMS}

\section{Introduction}
This paper is concerned with the qualitative behavior of deterministically modeled chemical reaction systems with mass-action kinetics.  We will provide multiple results pertaining to weakly reversible reaction systems that will allow us to conclude that the Global Attractor Conjecture, the most well known and important open problem in the field of chemical reaction network theory, holds in the single linkage class case.  That is, the conjecture holds when the underlying reaction diagram consists of a single connected component.

\subsection{Background and statement of the problem}
\label{sec:background}

 Natural questions about the
qualitative behavior of deterministically modeled chemical reaction systems include the existence of positive
equilibria (fixed points), stability properties of equilibria, and the non-extinction, or persistence, of species, which are the constituents of the system.  As the exact values of key system parameters, termed \textit{rate constants} and which we will denote by $\kappa_k$, are usually difficult to find experimentally
and, hence, are oftentimes unknown, it would be best to answer these
questions \textit{independently of the values of these parameters}.
Building off the work of Fritz Horn, Roy Jackson, and Martin Feinberg
\cite{FeinbergLec79, Feinberg87, FeinHorn1972, Horn72, Horn74, HornJack72} the
mathematical theory termed ``Chemical Reaction Network Theory'' has
been developed over the previous thirty-five years to answer these
types of questions.

Early work by Feinberg, Horn, and Jackson showed that if a reaction network with
deterministic mass-action kinetics admits a so called ``complex-balanced''
equilibrium, see Definition \ref{def:cb},
then there exists a unique complex-balanced equilibrium within the
interior of \textit{each} positive compatibility class, or invariant manifold \cite{FeinHorn1972, Horn72,
  HornJack72} (throughout, we will often refer to the invariant manifolds of the systems of interest as \textit{compatibility classes}, or \textit{stoichiometric} compatibility classes, to stay in line with the terminology of chemical reaction network theory).  Horn and Jackson also
proved the existence of a strict, entropy type, Lyapunov function
that gives local asymptotic stability of each such equilibrium
\textit{relative to its compatibility class}.  Later, Horn, Jackson, and Feinberg proved what is best known as the Deficiency Zero Theorem: 
that \textit{regardless of the choice of parameters $\kappa_k$}, a
reaction network with deterministic mass-action kinetics that is both
weakly reversible and has a deficiency of zero must admit a
complex-balanced equilibrium \cite{FeinbergLec79, Feinberg87, FeinHorn1972}.  Here, a reaction network is \textit{weakly reversible} if each of the connected components of its reaction diagram is strongly connected, see Definition \ref{def:WR}, and the deficiency of a network is defined in Definition \ref{def:deficiency}.  
Collecting ideas shows that the results pertaining to complex-balanced systems, for example those in  \cite{FeinHorn1972, Horn72,
  HornJack72}, apply to this (deficiency zero and weakly reversible) large class
of networks.

It was conjectured at least as early as 1974 that complex-balanced
equilibria of reaction networks are globally asymptotically stable
relative to the interior of their positive compatibility classes
\cite{Horn74}.  This problem was given the name ``Global Attractor
Conjecture'' by Craciun \textit{et al.}\ \cite{CraciunShiu09}, and is considered
to be one of the most important open problems in the field of chemical
reaction network theory \cite{Anderson08, AndShiu,
  CraciunShiu09, CraciunPantea, Sontag2001}.  \vspace{.1in}

\noindent \textbf{Global Attractor Conjecture.}  A complex-balanced
equilibrium contained in the interior of a positive compatibility
class  is a \textit{global attractor} of the interior of that positive class.
\vspace{.1in}

Using the Lyapunov function of Horn and Jackson it is relatively straightforward to show that each trajectory of a complex-balanced
system remains bounded and converges either to the unique equilibrium within the interior of its invariant manifold, or to the boundary of the positive orthant, $\partial \R^N_{\ge 0}$.  Therefore, the Global Attractor Conjecture will be proven if it can be shown that any complex-balanced system is
\textit{persistent} in the sense of Definition \ref{def:persistence} below.

\begin{definition} 
  For $t\ge 0$ denoting time, let $\phi(t,x_0)$ be a trajectory to a dynamical system in $\R^N$ with initial condition $x_0$.  A trajectory $\phi(t,x_0)$ with state space $\R^N_{\ge 0}$  is said to be {\em 
    persistent} if 
    \begin{equation*}
	\liminf_{t\to \infty} \phi_i(t,x_0) > 0,
\end{equation*}
for all $i \in \{1,\dots,N\}$, where $\phi_i(t,x_0)$ denotes the $i$th component of $\phi(t,x_0)$. A dynamical system is  said to be
   {\em persistent} if each trajectory with positive initial 
  condition is persistent.  
  \label{def:persistence} 
\end{definition} 

We will use the notation $\phi(t,x_0)$ for trajectories  throughout the paper.
 We see in Definition \ref{def:persistence} that persistence corresponds to a non-extinction requirement.  Some authors refer to dynamical systems satisfying  the above condition  as \textit{strongly persistent} \cite{Takeuchi1996}.  In their work, persistence only requires the weaker condition that $\limsup_{t\to \infty} \phi_i(t,x_0) > 0$  for each $i\in\{1,\dots,N\}$.

\begin{definition}
For $t\ge 0$, let $\phi(t,x_0)$ be a trajectory to a dynamical system in $\R^N$ with initial condition $x_0$.  The set of 
{\em $\omega$-limit points} for this trajectory is the set of 
accumulation points: 
\begin{equation*} 
  \omega(\phi(\cdot ,x_0))  \eqdef  \{x \in \R^N ~:~ \phi(t_n,x_0) \to {x}  \text{ 
    for some sequence } t_n \to \infty \}. 
\end{equation*} 
%For autonomous dynamical systems we will simply write $\omega(x_0)$.
\end{definition}
\indent Note that for bounded trajectories, persistence is equivalent to the condition that  $\omega(\phi(\cdot,x_{0})) \cap \partial \R^{N}_{\ge 0} = 
  \emptyset$.

It can
be shown that a complex-balanced network is necessarily weakly
reversible \cite{Feinberg72, FeinbergLec79,Gun2003}.   Therefore, in light of the above
discussion, the Global Attractor Conjecture is implied by the
following, more general, conjecture of Feinberg (see Remark 6.1.E in \cite{Feinberg87}):

\vspace{.1in}

\noindent \textbf{Persistence Conjecture.}  Any weakly reversible
reaction network with mass-action kinetics and bounded trajectories is persistent.

\vspace{.1in}

\noindent Other formulations of the Persistence Conjecture leave out the assumption of bounded trajectories, and the above is, therefore, a weaker version of the usual conjecture.  In fact, it is an open problem as to whether or not weakly reversible networks give rise to only bounded trajectories, and we feel it is best to separate these two conjectures.  Note that the Persistence Conjecture makes no assumption on the choice
of rate constants.

Both conjectures remain open. However, in recent years there has been much activity aimed at their resolution.   It is known that only certain faces of the boundaries of the invariant manifolds can admit $\omega$-limit points, those associated with a \textit{semi-locking set} (using the terminology of \cite{Anderson08}), which is a subset of the species whose absence is forward invariant. (Semi-locking sets were termed \textit{siphons} in the earlier paper \cite{Angeli2007} in which their concept was formally introduced.  However, see Proposition 5.3.1 and Remark 6.1.E of \cite{Feinberg87} for an earlier treatment that anticipated these definitions.)  This fact has typically focused attention on understanding the behavior of these systems near different faces of the boundaries of the invariant manifolds.  For example, Anderson \cite{Anderson08}  and Craciun, Dickenstein, Shiu, and Sturmfels \cite{CraciunShiu09} used different methods to independently conclude that  vertices of the positive compatibility classes (which are polyhedra, see \cite{AndShiu}) can not be $\omega$-limit points even if they are associated with a semi-locking set.  In \cite{AndShiu}, it was shown that weak reversibility of the network
guarantees that facets---faces of one dimension less than the
compatibility class itself; that is, a face of codimension one---of
the positive classes  ``repel,'' in a certain sense, trajectories.  This fact was used to prove the Global Attractor Conjecture when the stoichiometric compatibility classes, or invariant manifolds, are  two-dimensional. More recently, Craciun, Pantea, and Nazarov proved that two-species, weakly reversible systems are both persistent and permanent (trajectories eventually enter a fixed, compact subset of the strictly positive orthant $\R^N_{> 0}$). They then used this fact to prove that the Global Attractor Conjecture holds for three-species systems
\cite{CraciunPantea}.  Pantea then extended these ideas to prove the Global Attractor Conjecture for all systems for which the stoichiometric compatibility class has dimension less than or equal to three \cite{Pantea2011}.  

In \cite{Sontag2007} the authors studied persistence by introducing the notion of \textit{dynamic non-emptiability} for semi-locking sets, which corresponds to a dominance ordering of the monomials near a given face of the compatibility class.  These ideas were expanded in \cite{JohnstonSiegel2011} where the concept of \textit{weak dynamic non-emptiability} was introduced, and a connection was made to the work on facets in \cite{AndShiu}.  This work should also be compared to the use of \textit{strata} in both \cite{JohnstonSiegel2} and \cite{CraciunShiu09}, where monomial dominance is again considered near faces of the invariant manifold.  Later, we will see that monomial dominance is at the heart of the current paper as well.  However, and importantly, the dominance is no longer sequestered to individual faces of the invariant manifold and is instead considered along sequences of trajectory points in time.  Further, the monomials are grouped into classes of comparable growth, which allows for a greater understanding of the behavior of the system.

 Biological models
in which the non-existence of $\omega$-limit points on the boundary implies global
convergence  include the ligand-receptor-antagonist-trap model of
G. Gnacadja \textit{et al.}  \cite{Gnacadja2007, Gnacadja2009}, the
enzymatic mechanism of D. Siegel and D. MacLean \cite{Siegel04}, and
T. McKeithan's T-cell signal transduction model \cite{McKeithan95}
(the mathematical analysis appears in the work of E. Sontag
\cite{Sontag2001} and Section 7.1 in the Ph.D. thesis of M.  Chavez
\cite{ChavezThesis}).  Recently, Gopalkrishnan showed that any network that violates the Persistence Conjecture must be `catalytic' in a precise sense \cite{Manoj2011}.

Complex-balanced systems also play an important role in the study of stochastically modeled reaction networks.  In \cite{AndProdForm}, Anderson, Craciun, and Kurtz showed that a stochastically modeled system admits a product form stationary distribution if the associated deterministically modeled system admits a complex-balanced equilibrium (see also \cite{Lubensky2010} in which the same results were derived independently by David Lubensky).  Also, in the study of stochastically modeled reaction systems with multiple scales, which is the norm as opposed to the exception in the stochastic setting, it is often desirable to perform a reduction to the system using asymptotic analysis (usually averaging and law of large number techniques), see, for example, \cite{Ball06} or \cite{KurtzKang}.  If one component of the system can be shown to behave deterministically in the asymptotic limit, then knowing this component converges to a steady state (i.e. knowing that the conclusions of the current paper hold) may allow for the proof of results pertaining to the dynamics of the other components.  

\subsection{Results in this paper}

 In this paper, we will provide multiple results pertaining to deterministically modeled, weakly reversible systems.  Most  results will pertain to the dynamics of individual trajectories.  These results will allow us to conclude that the Global Attractor Conjecture holds in the case when the underlying reaction network consists of one linkage class, or connected component.  It is worth noting that we will not provide a proof of the Persistence Conjecture in the one linkage class case.  As will become apparent, the technical difference between the conjectures will be captured by a condition on where the $\omega$-limit points of a trajectory can reside, see Theorem \ref{thm:main}.
 
 To prove our results, we will introduce a method for partitioning the relevant monomials of the dynamical system along sequences of trajectory points into classes with comparable growths.  This method will allow us to conclude that if a trajectory converges to the boundary, then a whole \textit{family} of Lyapunov functions decrease along the trajectory.  We will then be able to overcome the fact that the usual Lyapunov functions of chemical reaction network theory are bounded on the boundary of the positive orthant, which has been the technical sticking point to a proof of the Global Attractor Conjecture in the past.  The methods developed should prove useful in future contexts, both deterministic and stochastic, as well as the current one.  Also, it will be natural to focus our attention on systems with a generalized mass-action kinetics in which the rate constants are allowed to be functions of time.  This context is useful because the projection of a trajectory of a reaction system onto some relevant subset of the species can itself be viewed as a trajectory of a reaction system with generalized mass-action kinetics.

The outline of the paper is as follows.  In Section \ref{sec:def_concepts}, we will provide the requisite definitions and terminology from chemical reaction network theory.  In Section \ref{sec:projection}, we will discuss projected dynamics, and introduce and develop the basic properties of reduced reaction networks and generalized mass-action systems.  In Section \ref{sec:results}, we will  give our main results together with their proofs.  In Section \ref{sec:example}, we present an example to demonstrate our results.

\section{Preliminary concepts and definitions}
\label{sec:def_concepts}

Most of the following definitions are standard in chemical reaction network theory.  The interested reader should see  \cite{FeinbergLec79} or \cite{Gun2003} for a more detailed introduction.\vspace{.125in}

\noindent \textbf{Reaction networks.}  An example of a chemical reaction is $2S_{1}+S_{2} ~\rightarrow~ S_{3},$
where we interpret the above as saying two molecules of type $S_1$ combine with a molecule of type $S_2$ to produce a molecule of type $S_3$.  For now, assume that there are no other reactions under consideration.  The $S_{i}$ are called chemical {\em species} and the linear combinations of the species found at either end of the reaction arrow, namely $2S_{1}+S_{2}$ and
$S_{3}$, are called chemical {\em complexes.}  Assigning the {\em
  source} (or reactant) complex $2S_{1}+S_{2}$ to the vector $y =
(2,1,0)$ and the {\em product} complex $S_{3}$ to the vector
$y'=(0,0,1)$, we can formally write the reaction as $ y \rightarrow y' .$ 

 In the
general setting we denote the number of species by $N$, and for $i \in \{1,\dots, N\}$ we denote the $i$th species as $S_{i}$.  We then
consider a finite set of reactions with the $k$th denoted by $ y_{k} \rightarrow y_{k}', $
 where $y_k, y_k' \in \Z^N_{\ge 0}$ are (non-equal) vectors whose components give the coefficients of the source and product complexes, respectively.  Using a slight abuse of notation, we will also refer to the vectors $y_k$ and $y_k'$ as the complexes.  Note that if $y_k = \vec 0$ or $y_k' = \vec 0$ for some $k$, then the $k$th
reaction represents an input or output, respectively, to the system.  Note also that any
complex may appear as both a source complex and a product complex in
the system.  We will usually, though not always (for example, see condition 3 in Definition \ref{def:crn} below) use the prime $'$ to denote the product complex of a given reaction.

As an example, suppose that the entire system consists of the two species $S_1$ and $S_2$ and the two reactions
\begin{equation}
	S_1 \to S_2 \quad \text{and} \quad S_2 \to S_1,
	\label{eq:ex1}
\end{equation}
where $S_1 \to S_2$ is arbitrarily labeled as ``reaction 1.''  Then $N = 2$ and 
\begin{equation*}
	y_1 = (1,0), \quad y_1' = (0,1) \qquad \text{and} \qquad  y_2 = (0,1), \quad y_2' = (1,0).
\end{equation*}
Thus, the vector $(1,0)$, or equivalently the complex $S_1$, is both $y_1$, the source of the first reaction, and $y_2'$, the product of the second.

 For ease of notation, when there is no need for
enumeration we will typically drop the subscript $k$  from the notation
for the complexes and reactions.

\begin{definition}
  Let $\S = \{S_i\}_{i=1}^N$, $\C = \{y\}$ with $y \in \Z^N_{\ge 0}$, and $\Re = \{y \to y'\}$ denote
  finite sets of species, complexes, and reactions, respectively.  The triple
  $\{\S, \C, \Re\}$ is called a {\em chemical reaction network} so long as the following three natural requirements are met:
  \begin{enumerate}
     \item  For each $S_i\in \S$, there exists at least one complex $y\in \C$  for which $y_{i} \ge 1$.
     \item There is no trivial reaction $y \to y \in \Re$ for some complex $y \in \C$.
     \item For any $y\in \C$, there must exist a $y'\in \C$ for which $y \to y' \in \Re$ or $y' \to y \in \Re$.
  \end{enumerate}
    %Throughout, we will use $N$ and $R$ to denote the number of elements of $\S$ and $\Re$, respectively.  
    If the triple $\{\S,\C,\Re\}$ satisfies all of the above requirements except 1., above, then we say $\{\S,\C,\Re\}$ is a  chemical reaction network {\em with inactive species}.
  \label{def:crn}
\end{definition}

\textbf{Notation:}  We will use each of the following choices of notation to denote a complex from $\C$:  $y$, $y'$, $y_k$, $y_k'$, $y_i$, $y_j$, $y_{\ell}$, and even $z_k$.  However, there will be other times in which we wish to denote the $i$th component of a complex.  If the complex in question has been denoted by $y_k$, then we will write $y_{k,i}$.  However, if the complex has been denoted by $y$, then we would write its $i$th component as $y_i$, which, through context, should not cause confusion with a choice of \textit{complex} $y_i$.  See, for example, condition 1 in Definition \ref{def:crn} above.

\begin{definition}
To each reaction network $\{\mathcal{S},\mathcal{C},\mathcal{R}\}$
we assign a unique directed graph called a {\em reaction diagram}
constructed in the following manner.  The nodes of the graph are the
complexes, $\mathcal{C}$.  A directed edge $(y,y')$ exists if and only
if $y \to y' \in \Re$.  Each connected
component of the resulting graph is termed a  {\em linkage class} of
the graph.  %, and we denote the number of linkage classes by~$l$
\label{def:diagram}
\end{definition}

For example, the system described in and around \eqref{eq:ex1} has reaction diagram $S_1 \rightleftarrows S_2$,
which consists of a single linkage class.

\begin{definition}
  Let $\{\S,\C,\Re\}$ denote a chemical reaction network.  Denote the
  complexes of the $i$th linkage class by $L_i \subset \C$.  We say a  $T \subset \C$
  consists of a  {\em union of linkage classes} if $T = \cup_{i \in I} L_i$
  for some nonempty index set $I$.
\end{definition}

\begin{definition}
  The chemical reaction network $\{\S,\C,\Re\}$ is said to be  {\em weakly
    reversible} if each linkage class of the corresponding reaction
  diagram is strongly connected.  A network is said to be
   {\em reversible} if $y' \to y \in \Re$ whenever $y \to y' \in
  \Re.$ 
  \label{def:WR}
\end{definition}

%As the results of this paper are specific to systems that are weakly reversible, it is worth pausing to point out an equivalent formulation of weak reversibility that will be used throughout. 
 It is easy to see that a chemical reaction network is weakly reversible if and only if for each reaction $y \to y'\in \Re$, there exists a sequence of complexes, $y_1,\dots, y_r\in \C$, such that $y' \to y_1 \in \Re, y_1 \to y_2 \in \Re, \cdots, y_{r-1}\to y_r\in \Re,$ and $y_r \to y\in \Re$.

\vspace{.225in}

\noindent \textbf{Dynamics.}  A chemical reaction network gives rise to a dynamical system by way of
a \textit{rate function} for each reaction.
That is, for each $y_k \to y_k'\in \Re$, or simply $k\in\{1,\dots,|\Re|\},$ we suppose the existence
of a function $\displaystyle R_k =
R_{y_k \to y_k'}$ that determines the rate of that reaction.
 The functions $R_{k}$ are
typically referred to as the \textit{kinetics} of the system and will be denoted by $\K$, or $\K(t)$ in the non-autonomous case.  The
dynamics of the system is then given by the following coupled set of
(typically nonlinear) ordinary differential equations
\begin{equation}
  \dot x(t) = \sum_{k} R_{k}(x(t),t)(y_k' - y_k),
  \label{eq:main_general}
\end{equation}
where $k$ enumerates over the reactions and $x(t) \in \R^N_{\ge 0}$ is a vector whose $i$th component represents the concentration of species $S_i$ at time $t\ge 0$.

\begin{definition}
	A chemical reaction network $\{\S,\C,\Re\}$ together with a choice of kinetics $\K$ is called a {\em chemical reaction system} and is denoted via the quadruple $\{\S,\C,\Re,\K\}$.  In the non-autonomous case where the $R_k$ can depend explicitly on $t$, we will write $\{\S,\C,\Re,\K(t)\}$.   We say that a chemical reaction system is
   {\em weakly reversible} if its underlying network is.
\end{definition}

Integrating \eqref{eq:main_general} with respect to time yields
\begin{equation*}
  x(t) = x(0) + \sum_{k} \left(\int_0^t R_k(x(s),s) ds \right)
  (y_k' - y_k). 
\end{equation*}
Therefore, $x(t) - x(0)$ remains within $S =
\text{span}\{y_k' - y_k\}_{k \in \{1,\dots,R\}}$ for all time.

\begin{definition}
  The  {\em stoichiometric subspace} of a network is the linear
  space $S = \text{\em span}\{y_k' - y_k\}_{k \in \{1,\dots,|\Re|\}}$.  The vectors $y_k' - y_k$ are called the  {\em reaction vectors}.
  \label{def:stoich_sub}
\end{definition}

Under mild conditions on the rate functions of a
system, a trajectory $x(t)$ with
strictly positive initial condition $x(0) \in \R^N_{>0}$ remains in the
strictly positive orthant $\R^N_{>0}$ for all time (see, for example, Lemma~2.1 of
\cite{Sontag2001}).  Thus, the trajectory remains in the relatively open set $(x(0)
+ S) \cap \mathbb{R}^N_{> 0}$, where $x(0) + S := \{z \in \R^N \ | \ z
= x(0) + v, \text{ for some } v \in S\}$, for all time.  In other
words, this set is \textit{forward-invariant} with respect to the
dynamics.  It is also easy to show that under the same mild conditions on $R_k$, $(x(0) + S) \cap
\mathbb{R}^N_{\ge  0}$ is forward invariant with respect to the dynamics.  The sets $(x(0) + S) \cap
\mathbb{R}^N_{> 0}$ %and the closure $(x(0) + S) \cap \mathbb{R}^N_{\ge  0}$ are typically referred to as the \textit{positive} and \textit{non-negative stoichiometric compatibility classes}, respectively.
will be referred to as the \textit{positive stoichiometric compatibility classes}, or simply as the \textit{positive classes}.

The most common kinetics is that of \textit{mass-action kinetics}. A
chemical reaction system is said to have mass-action kinetics if all rate functions $R_{k} = R_{y_k \to y_k'}$ 
take the multiplicative form
\begin{equation}
  R_{k}(x) =  \kappa_k x_1^{y_{k,1}} x_2^{y_{k,2}} \cdots x_N^{y_{k,N}},
  \label{eq:massaction}
\end{equation}
where $\kappa_k$ is a positive reaction rate constant and $y_k$ is the source complex for the reaction.  For $u\in \R^N_{\ge 0}$ and $v \in \R^N$, we define 
\begin{equation*}
 u^v \eqdef u_1^{v_1} \cdots u_N^{v_N},
\end{equation*}
 where we have
adopted the convention that $0^0 = 1$, and the above is undefined if $u_i = 0$ when $v_i < 0$.  Mass action kinetics can then be written succinctly as $R_k(x) = \kappa_k x^{y_k}.$
Combining \eqref{eq:main_general} and \eqref{eq:massaction} gives the
following system of differential equations, which is the main object of study in this paper,
\begin{equation}
  \dot x(t) = \sum_{k} \kappa_k x(t)^{y_k}(y_k' - y_k).
  \label{eq:main}
\end{equation}

While it is the properties of solutions to the system \eqref{eq:main} that are of most interest to us, it will be natural for us to consider systems with a generalized form of mass-action kinetics.  The following definition is similar to Definition 2.6 in \cite{CraciunPantea} for  ``$\kappa$-variable mass-action systems.''  See also \cite{Angeli2011} for a recent treatment of chemical reaction systems with non-autonomous dynamics.

\begin{definition}
	We say that the non-autonomous system $\{\S,\C,\Re,\K(t)\}$ has  {\em bounded mass-action kinetics} if there exists an $\eta > 0$ such that for each $k\in \{1,\dots,|\Re|\}$
	\begin{equation*}
		R_k(x,t) = \kappa_k(t) x^{y_k},
	\end{equation*}
	where $\eta < \kappa_k(t) < 1/\eta$ for all  $t \ge 0$ and $k \in \{1,\dots,|\Re|\}$.
\end{definition}

\subsection{Complex balanced equilibria and the deficiency of a network}
\label{sec:conjectures}

The Global Attractor Conjecture, which was stated in Section \ref{sec:background}, is concerned with the asymptotic 
stability of  complex-balanced equilibria for mass-action systems. For each complex $y 
\in \C$ we will write $\{k \ | \ y_k = y\}$ and $\{k \ | \ y_k' = 
y\}$ for the subsets of reactions $y_k\to y_k' \in \mathcal{R}$ for which 
$y$ is the source and product complex, respectively.

\begin{definition} 
  We say $c$ is an {\em equilibrium} of the dynamical system $\dot x(t) = f(x(t))$, if $f(c) = 0$.  An equilibrium $c \in \R^N_{\ge 0}$ of \eqref{eq:main} is 
  said to be {\em complex-balanced} if the following equality  
  holds for each complex $y \in \C$: 
  \begin{equation*} 
    \sum_{\{k \ | \ y_k = y \}} \kappa_k   c^{y_k} 
    ~=~ \sum_{\{k \ | \ y_k' = y\}}  \kappa_k c
    ^{y_k}.  
    \label{def:cb}
  \end{equation*} 
 \end{definition} 
 
  Note that on the right hand side of the above equality, $y_k$ represents the source complex for a given reaction for which $y$ is the product complex, whereas on the left hand side each source complex is identically equal to $y$.
  Thus, $c$ is a complex-balanced equilibrium if for all complexes $y \in \C$, at 
  concentration $c$ the 
  sum of rates for reactions for which  $y$ is the source is equal to 
  the sum of rates for reactions for which  $y$ is the product.  That is, crudely, the total flux into complex $y$ is equal to the total flux out of complex $y$.
  A {\em complex-balanced system} is a mass-action system 
  $\{\S,\C,\Re,\K\}$ that admits a  complex-balanced 
  equilibrium with strictly positive components. 
  % need the assumption of strictly positive, because the presence of 
  % boundary c-bal equilibria does not imply the same for the interior 
 
In \cite{CraciunShiu09}, complex-balanced systems are called ``toric dynamical 
systems'' in order to highlight their inherent algebraic structure. 
 There are two 
important special cases of complex-balanced systems: the 
detailed-balanced systems and the zero deficiency systems. 
 
% :Definition of d-balancing 
\begin{definition} 
  An equilibrium $c \in \R^N_{\ge 0}$ of a reversible system 
  with dynamics given by mass-action kinetics \eqref{eq:main} is said to be 
  {\em detailed-balanced} if for any pair of reversible reactions 
  $y_k \rightleftarrows y_k'$ with forward reaction rate $\kappa_k$ 
  and backward rate $\kappa_k'$ the following equality holds: $\kappa_k c^{y_k} = \kappa_k' c^{y_k'}.$ 
  \end{definition}
  
  %That is, $c$ is a detailed-balanced equilibrium if the 
  %forward rate of each reaction equals the reverse rate at 
  %concentration $c$.  
  A \textit{detailed-balanced system} is a 
  reversible system with dynamics given by mass-action kinetics \eqref{eq:main} that admits 
  a strictly positive detailed-balanced equilibrium. 
It is clear that detailed-balancing implies complex-balancing. 
 
\begin{definition} 
  For a chemical reaction network 
  $\{\mathcal{S},\mathcal{C},\mathcal{R}\}$, let $n$ denote the number 
  of complexes, $l$ the number of linkage classes, and $s$ the 
  dimension of the stoichiometric subspace, $S$.  The 
  {\em deficiency} of the reaction network is the integer $n-l-s$. 
  \label{def:deficiency}
\end{definition} 
 
The deficiency of a reaction network is non-negative 
because it can be interpreted as either the dimension of a 
certain linear subspace \cite{FeinbergLec79} or the codimension of a certain 
ideal \cite{CraciunShiu09}.  Note that the deficiency depends only on the 
reaction network and not the choice of kinetics. The Deficiency Zero Theorem  tells us that any weakly 
reversible reaction system \eqref{eq:main} whose deficiency is zero 
is complex-balanced, and that this fact is independent of the choice 
of rate constants $\kappa_k$ 
\cite{FeinbergLec79}.   
% In fact, this property of being c-bal regardless of rate parameters 
% defines the space of deficiency zero systems. 
On the other hand, a reaction diagram with a deficiency that is 
positive may give rise to a system that is both complex- and 
detailed-balanced, complex- but not detailed-balanced, or neither, 
depending on the values of the rate constants $\kappa_{k}$ 
\cite{CraciunShiu09,Feinberg72,Feinberg89,Horn72}. 
 
Complex-balanced systems have the property that there is a unique, complex-balanced equilibrium within the interior of each positive stoichiometric compatibility class \cite{Horn72, Horn74, HornJack72}.  Thus, proving that each such equilibrium is globally asymptotically stable relative to its positive class, i.e. showing the conclusion of the Global Attractor Conjecture holds, would completely characterize the long-time behavior of these systems.

%
%\vspace{.125in}

%\noindent \textbf{Notation:}

\section{Projected dynamical systems and reduced reaction networks}
\label{sec:projection}

The two related concepts  presented below, projected dynamics and reduced reaction networks, will be used in the proofs of the main theorems of Section \ref{sec:results}.  There, we will  consider a bounded trajectory of a system  and incorporate the dynamics of those species which do not approach the boundary of the positive class into the rate constants, thereby yielding a system with bounded mass-action kinetics.  This will be the {\em projection} of the dynamics onto those species that do approach the boundary.  The resulting reaction network will be the \textit{reduced network}.

\subsection{Projected dynamics}
\label{sec:proj_dyn}

As discussed in Section \ref{sec:def_concepts}, our  interest is in the qualitative dynamics of an $N$ dimensional,  autonomous systems of differential equations with parameters $\ka = (\kappa_1,\dots,\kappa_{|\Re|})$.  That is, we are considering systems of the general form
\begin{align}
\begin{split}
	\dot x_1(t) &= f_1(\ka, x_1(t),\dots,x_N(t))\\
	&\hspace{.07in} \vdots\\
	\dot x_N(t) &= f_N(\ka, x_1(t),\dots,x_N(t)).
	\end{split}
	\label{eq:gen1}
\end{align}

To study these systems, it will be natural to later consider an associated \textit{non-autonomous} set of differential equations constructed by projecting \eqref{eq:gen1} onto a subset of the dependent variables.   Specifically, let  $U \subset \{1,\dots,N\}$  be nonempty with $|U| = M \le N$.  Without loss of generality, assume for now that $U = \{1,\dots,M\}$. We now consider the dynamical system in $\R^M$ with state vector $x|_U$, whose $i$th component, denoted $x|_{U,i}$, is for all time equal to $x_i$, defined in \eqref{eq:gen1}. We see that the vector valued function $x|_U$ satisfies
\begin{align}
\begin{split}
	\dot x|_{U,1}(t) &= f_1(\ka, x|_{U,1}(t),\dots,x|_{U,M}(t),x_{M+1}(t),\dots , x_N(t))\\
	&\hspace{.3in} \eqdef \hat f_1(\hat \kappa(t),  x|_{U,1}(t),\dots,x|_{U,M}(t))\\
	&\hspace{.07in} \vdots\\
	\dot x|_{U,M}(t) &= f_M(\ka, x|_{U,1}(t),\dots,x|_{U,M}(t),x_{M+1}(t),\dots , x_N(t))\\
	&\hspace{.3in} \eqdef \hat f_M(\hat \kappa(t),  x|_{U,1}(t),\dots,x|_{U,M}(t)),
	\end{split}
	\label{eq:proj_gen}
\end{align}
where $\hat \kappa(t) = (\kappa, x_{M+1}(t),\dots , x_N(t))$ and $\hat f_i$, $i \in \{1,\dots, M\}$, are defined via the above equalities.  Thus, the system $x|_{U,i}$ satisfies the \textit{same} differential equations as does the system for $x_i$, except any dependence upon the variables $x_j$, for $j \ge M+1$, has been incorporated into the dynamics as known, time-dependent functions.  That is, they are now viewed as inputs, or perhaps forcing, to the system.
 We will call the system \eqref{eq:proj_gen} the \textit{projected dynamics} of \eqref{eq:gen1} with respect to $U$.
%
%Note that the dependencies of the $f_j$ have changed in the transition from  \eqref{eq:gen1} to \eqref{eq:proj_gen}.  That is, in \eqref{eq:proj_gen} we group the elements of $U$ and $U^c$ together.  This should not cause confusion.  

For example, consider the system 
\begin{align}
\begin{split}
	\dot x_1 &=- \kappa_1 x_1x_2^2  - \kappa_2 x_1x_3  + \kappa_5 x_2\\
	\dot x_2 &=\kappa_3 x_3 -2\kappa_1 x_1x_2^2 - \kappa_5x_2\\
	\dot x_3 &= \kappa_4 +\kappa_1x_1x_2^2 -\kappa_3x_3
	\end{split},
	\label{ex:project}
\end{align}
where $\kappa = (\kappa_1,\dots,\kappa_5)\in \R^5_{>0}$.  Then for $U = \{1,3\}$ the projected dynamics of \eqref{ex:project} with respect to $U$ is
\begin{align}
\begin{split}
	\dot x_1 &=- \kappa_1 \zeta_{1}(t) x_1  - \kappa_2 x_1x_3  + \kappa_5 \zeta_{2}(t)\\
	\dot x_3 &= \kappa_4 +\kappa_1 \zeta_{1}(t) x_1-\kappa_3x_3
	\end{split}
	\label{eq:proj2}
	\end{align}
where $\zeta_{1}(t) = x_2(t)^2,\ \zeta_2(t) = x_2(t),$ and $x_2(t)$ is still defined via the system \eqref{ex:project}.  The goal would now be to translate any control we can get over $x_2(t)$, and hence $\zeta(t)$, into qualitative information about the behavior of $x_1$ and $x_3$.  Later, we will simply incorporate the function $\zeta$ of  \eqref{eq:proj2} into the variables $\kappa_k$ and view each $\kappa_k(t)$ as a function of time.

\subsection{Reduced reaction networks}
\label{sec:reduced_networks}

We begin with more notation.  Let  $v \in \R^N$ for some $N \ge 1$, and let $U \subset \{1,\dots,N\}$ be nonempty.  We write $U[j]$ for the $j$th component of $U$. We then write $v|_U$ to denote the vector of size $|U|$ with 
\begin{equation*}
 v|_{U,j} = (v|_U)_j \eqdef v_{U[j]}
\end{equation*}
 for $j \in \{1,\dots, |U|\}.$
  Thus, $v|_U$ simply denotes the projection of $v$ onto the components enumerated by $U$.
For example, if $N = 8$ and $U = \{2,4,7\}$, then for any $v \in \R^8$, $v|_{U} = (v_2,v_4,v_7)$.

\begin{definition}
  Consider a reaction network $\{\S,\C,\Re\}$ with $\S = \{S_1,\dots, S_N\}$ and let $U \subset
  \{1,\dots, N\}$ be nonempty.  The  {\em reduced reaction network of $\{\S,\C,\Re\}$
  associated with $U$} is the reaction network $\{\S_U, \C_U, \Re_U\}$
  constructed in the following manner:
  \begin{enumerate}
  \item Set $\S_U = \{S_i \in \S\ : \ i \in U\}$.
  \item Set $\C_U = \{y|_U \ : \ y\in  \C\}$.  We say the complex $y$
    {\em reduces to} the complex $y|_U$.
  \item Set $\Re_U = \{y|_U \to y|_U' \ : \ y \to y' \in \Re, \text{ and for which $y|_U \ne y'|_U$}\}$. 
  \item If a resulting linkage class consists of a single complex, we
    delete that complex from $\C_U$.
  \end{enumerate}
  \label{def:reduced}
\end{definition}

\begin{example}
Consider the reaction network with species $\S = \{S_1,\dots,S_5\}$ and reaction diagram
\begin{align}
\begin{split}
    2S_1 + S_2 &\overset{\kappa_1}{\underset{\kappa_2}{\rightleftarrows}} S_3 + 2S_1\\
    S_5 + S_3\overset{\kappa_3}{\underset{\kappa_4}{ \rightleftarrows}} S_1 &+ S_4 \overset{\kappa_5}{\underset{\kappa_6}{\rightleftarrows}} S_5 + S_2\\
    S_5 + 2S_2 &\overset{\kappa_7}{\underset{\kappa_8}{\rightleftarrows}} S_2 + S_3,
    \end{split}
    \label{eq:ex_diagram}
\end{align}
where we have ordered the reactions and incorporated the rate constants into the reaction diagram.  Let $U = \{1,4,5\}$.  Then, $S_U = \{S_1,S_4,S_5\}$, $\C_U = \{S_5, S_1 + S_4, \vec 0\}$, and the resulting diagram of the reduced reaction network is
\begin{align}
	\vec 0 \rightleftarrows S_5 \rightleftarrows S_1 + S_4.
	\label{eq:ex_diagram_reduced}
\end{align}
Here, both the complex $2S_1+S_2$ and $S_3 + 2S_1$ reduce to $2S_1$. However, by rule 3 in Definition \ref{def:reduced} we do not include $2S_1 \to 2S_1$ in  $\Re_U$, and by rule 4 we delete $2S_1$ from $\C_U$.  Note also that the original network has three linkage classes whereas the reduced network  has only one.
\end{example}

Note that because of rule 4 in Definition
\ref{def:reduced}, it is possible to have $S_i \in \S_U$, even though
$S_i$ does not appear in any complex in $\C_U$.  For example, if $1
\in U$, but $2,3 \notin U$, and the only reactions in which $S_1$
participates are $S_1 + S_2 \rightleftarrows S_1 + S_3$, then $S_1 \in
\S_U$, even though $S_1$ does not appear in any complex in $\C_U$.  In this case, the reduced reaction network has inactive species, see Definition \ref{def:crn}.
Note, however, that this situation only arises if the concentration
of $S_1$ was time independent in the original system.  Thus, the original system could have
been reduced by incorporating $S_1$ into the rate constants.    Such an incorporation can have the effect of lowering the deficiency of the network without changing the dynamics (see \cite{CraciunPantea2008} to see a treatment of how different network structures may give rise to the same dynamical system).

%We therefore make the following standing assumption pertaining to any chemical reaction network under consideration:

%\vspace{.125in}

%\noindent \textbf{Standing Assumption:}  For a chemical reaction network $\{\S,\C,\Re\}$, for each $S_i \in \S$ there is a $k \in \{1,\dots,R\}$ for which $(y_k' - y_k)_i \ne 0$.

%\vspace{.125in}

%So long as $\{\S,\C,\Re\}$ satisfies the above standing assumption, it is clear that for any $U\subset \{1,\dots,N\}$, the reduced network $\{\S_U,\C_U,\Re_U\}$ satisfies the requirements of Definition \ref{def:crn} and is therefore itself a reaction network.

The following lemmas  give some insight into how the structure of $\{\S_U,\C_U, \Re_U\}$ depends upon the structure of $\{\S,\C,\Re\}$.  Both will be used in Section \ref{sec:results} in the proof of our main results.

\begin{lemma}
	Let $\{\S,\C,\Re\}$ be a reaction network with $\S = \{S_1,\dots, S_N\}$.  Let $U \subset \{1,\dots,N\}$ be nonempty.  Then, the reduced reaction network $\{\S_U,\C_U,\Re_U\}$ has less than or equal to the number of linkage classes as $\{\S,\C,\Re\}$.
	\label{lem:linkage}
\end{lemma}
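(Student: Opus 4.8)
The plan is to exhibit the passage from $\{\S,\C,\Re\}$ to $\{\S_U,\C_U,\Re_U\}$ as a surjective graph homomorphism (up to harmless self-loops) between their reaction diagrams, and then to observe that rule~4 of Definition~\ref{def:reduced} only ever deletes whole connected components.

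First I would introduce the intermediate network $\{\S_U,\C_U',\Re_U\}$ obtained by applying rules 1--3 of Definition~\ref{def:reduced} but not rule~4, so that $\C_U' = \{y|_U : y \in \C\}$ and $\C_U$ is obtained from $\C_U'$ by deleting precisely those complexes incident to no reaction of $\Re_U$ (a linkage class consisting of a single complex, as in Definition~\ref{def:reduced}, is exactly an isolated vertex, since rule~3 leaves no self-loops). Write $G_0$, $G'$, $G$ for the undirected reaction diagrams of $\{\S,\C,\Re\}$, $\{\S_U,\C_U',\Re_U\}$, and $\{\S_U,\C_U,\Re_U\}$ respectively, and let $l$ be the number of connected components of $G_0$.

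Next I would check that $\psi : \C \to \C_U'$, $\psi(y) = y|_U$, is a surjective homomorphism of graphs-with-loops: by rule~3, each reaction $y\to y'\in\Re$ maps either to an edge $y|_U \to y|_U'$ of $G'$ or (when $y|_U = y|_U'$) to a loop, which is irrelevant to connectivity, while surjectivity on vertices is immediate from the definition of $\C_U'$. It follows that $\psi$ sends each connected component of $G_0$ into a single component of $G'$ (lift a path in $G_0$ and collapse repeats), and that every component of $G'$ is the image of at least one component of $G_0$; hence $G'$ has at most $l$ connected components.

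Finally I would pass from $G'$ to $G$: since $G$ is obtained by deleting vertices isolated in $G'$, and deleting an isolated vertex lowers the component count by exactly one, $G$ has no more components than $G'$, hence at most $l$, which is the claim. The only part requiring care---the ``hard part,'' such as it is---is the bookkeeping around rule~4: one must confirm that the complexes it removes are exactly the isolated vertices of $G'$ (not of $G_0$) and that removing them cannot split a component or otherwise raise the count; the remainder is the standard fact that a surjective graph homomorphism cannot increase the number of connected components.
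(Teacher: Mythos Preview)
Your proof is correct and follows essentially the same approach as the paper's own argument: both observe that the projection $y\mapsto y|_U$ preserves the ``same linkage class'' relation (your graph-homomorphism language makes this explicit), so that counting linkage classes in $\{\S_U,\C_U,\Re_U\}$ by enumerating over $\C_U$ gives at most the count obtained by enumerating over $\C$. The paper compresses this into two sentences, while you unpack the role of rule~4 more carefully by introducing the intermediate $\C_U'$ and explicitly noting that only isolated vertices are removed; this extra care is sound and the underlying idea is identical.
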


\begin{proof}
	%The above example demonstrates the possibility of having fewer linkage classes.  All that is required to complete the proof, therefore, is to argue that $\{\S_U,\C_U,\Re_U\}$ can not have \textit{more} linkage classes than $\{\S,\C,\Re\}$.  However, 
	Condition 3 of Definition \ref{def:reduced} shows that if $y_1, y_2\in \C$ are in the same linkage class, then $y_1|_U$ and $y_2|_U$ are also.  Thus the result is shown simply by counting the number of unique linkage classes of  $\{\S,\C,\Re\}$ and $\{\S_U,\C_U,\Re_U\}$ by enumerating over the complexes $\C$ and $\C_U$, respectively.
\end{proof}

\begin{lemma}
  Suppose that $\{\S,\C,\Re\}$, with $\S = \{S_1,\dots, S_N\}$, is weakly reversible and that $U \subset \{1,\dots, N\}$ is nonempty.  Then $\{\S_U, \C_U, \Re_U\}$ is weakly reversible.
  \label{lemma:WR}
\end{lemma}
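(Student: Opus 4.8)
The plan is to verify the path characterization of weak reversibility recorded just after Definition~\ref{def:WR}: a network is weakly reversible if and only if for every reaction $y\to y'$ there is a directed sequence of reactions leading from $y'$ back to $y$. So I would fix an arbitrary reaction $a\to b\in\Re_U$ and construct such a return path inside $\Re_U$, from $b$ to $a$.

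First I would unwind Definition~\ref{def:reduced}(3): the edge $a\to b\in\Re_U$ arises from some reaction $y\to y'\in\Re$ with $y|_U=a$, $y'|_U=b$, and $a\ne b$. Then I would invoke weak reversibility of $\{\S,\C,\Re\}$ to get complexes $z_0=y',z_1,\dots,z_r,z_{r+1}=y$ in $\C$ with $z_i\to z_{i+1}\in\Re$ for all $i$, and project everything onto $U$. This yields a sequence $b=z_0|_U,\dots,z_{r+1}|_U=a$ of complexes in $\C_U$ in which each consecutive pair is, by Definition~\ref{def:reduced}(3) again, either equal or an edge of $\Re_U$. The next step is to collapse consecutive repeats, producing $b=w_0,w_1,\dots,w_m=a$ with $w_j\neq w_{j+1}$ and $w_j\to w_{j+1}\in\Re_U$ for each $j$; since $a\neq b$ we have $m\ge 1$, so $w_1,\dots,w_{m-1}\in\C_U$ is exactly the intermediate sequence witnessing a directed $\Re_U$-path from $b$ to $a$. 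As $a\to b\in\Re_U$ was arbitrary, the path characterization holds for $\{\S_U,\C_U,\Re_U\}$, so it is weakly reversible.

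The argument is essentially bookkeeping, and the one point that needs a little care is the interaction of rules 3 and 4 of Definition~\ref{def:reduced}. Rule~3 is what forces the collapsing of consecutive equal projections (the ``edge'' $z_i|_U=z_{i+1}|_U$ is simply not present in $\Re_U$), and one must then check that none of the surviving complexes $w_0,\dots,w_m$ on the collapsed path was removed by rule~4 as an isolated complex. This is immediate once one observes that every $w_j$ is incident to at least one edge of $\Re_U$ (namely $w_{j-1}\to w_j$ or $w_j\to w_{j+1}$), so it lies in a linkage class of the reduced diagram containing more than one complex and is therefore never deleted. An alternative route is to argue directly that strong connectivity of each linkage class is inherited under projection, combined with Lemma~\ref{lem:linkage}; but phrasing it through the path characterization keeps the handling of rules 3 and 4 cleanest.
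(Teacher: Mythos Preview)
Your proof is correct and follows essentially the same route as the paper: lift the reduced reaction to a reaction in $\Re$, invoke weak reversibility there to obtain a directed return path, project the path to $U$, and collapse steps whose projected source and product coincide. Your treatment is slightly more explicit than the paper's in verifying that rule~4 of Definition~\ref{def:reduced} cannot delete any vertex on the collapsed path, but this is a minor elaboration rather than a different argument.
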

\begin{proof}
  Suppose $y|_U \to y|_U'\in \Re$.  By construction there are
  complexes $y,y' \in \C$, with $y\to y'\in \Re$, that reduce to $y|_U, y|_U'$.  By the weak reversibility of $\{\S,\C,\Re\}$, there
  is a sequence of directed reactions, $y_k \to y_k' \in \Re$, beginning with $y'$ and
  ending with $y$.  If for each $y_k \to y_k'$ of this sequence we have $y_k|_U \ne y_k'|_U$, then, by construction, there is a
  sequence of directed reactions in $\Re_U$ beginning with $y|_U'$
  and ending with $y|_U$. If $y_k|_U = y_k'|_U$ for one of the $k$, such a sequence still exists except now $y_{k}|_U \to y_{k+1}'|_U$. Similar reasoning  holds when $y_k|_U = y_k'|_U$ for more than one $k$.
\end{proof}

\vspace{.1in}

We need to provide the reduced network $\{\S_U,\C_U,\Re_U\}$ with a natural kinetics.  The kinetics, $\K(t)$, is given via the projection of the dynamics, described in Section \ref{sec:proj_dyn}, of $\{\S,\C,\Re,\K\}$  onto the elements of $U$.  The variables $\kappa_k(t)$ are now functions of time.  Note that the dynamics of the resulting projected system depends upon the dynamics of the original system.

\begin{example}
Again consider the reaction system with species $\S = \{S_1,\dots,S_5\}$ and reaction diagram \eqref{eq:ex_diagram}.  For $U = \{1,4,5\}$, the reduced network was \eqref{eq:ex_diagram_reduced}.  Incorporating the projected dynamics yields
\begin{align*}
	\vec 0 \overset{\hat \kappa_1(t)}{\underset{\hat \kappa_2(t)}{\rightleftarrows}} S_5 \overset{\hat \kappa_3(t)}{\underset{\hat \kappa_4(t)}{\rightleftarrows}} S_1 + S_4,
\end{align*}
where $\hat \kappa_1(t) = \kappa_8 x_2(t)x_3(t), \ \hat \kappa_2(t) = \kappa_7 x_2(t)^2,\  \hat \kappa_3(t) = \kappa_3x_3(t) + \kappa_6 x_2(t),\ \hat \kappa_4(t) = (\kappa_4 + \kappa_5),$ and the indexing $k$ of the $\hat \kappa_k(t)$ is over reactions in the reduced network and not the original network.
\end{example}

It is important to note that the variables $\hat \kappa_k(t)$ for the reduced system, which take the place of the rate constants, are always non-negative as they consist of positive linear combinations of non-negative monomials of the variables $x_j$ for which $j \notin U$, see \eqref{eq:new_rates} below.  Also, while the reduced system is a non-autonomous system with generalized mass-action kinetics, the functions $\hat \kappa_k(t)$ are not necessarily bounded either above or below. Finally, note that the functions $\hat \kappa_k(t)$ depend explicitly on the original \textit{trajectory} of the system and, in particular, will be different functions of time for different initial conditions of the system $\{\S,\C,\Re,\K\}$.

It is useful to have a more formal representation of the projected dynamics.  Thus,  let $\{\S,\C,\Re, \K\}$ be a reaction network with mass-action
  kinetics, $\K = \{\kappa_k\}$.  Let $U \subset
  \{1,\dots, |\S|\}$ be nonempty. The reduced mass-action system of $\{\S,\C,\Re,
  \K\}$ with respect to $U$ is the non-autonomous mass-action system
  $\{\S_U, \C_U, \Re_U, \K_U(t)\}$, with $\K_U(t) = \{\kappa_k(t)\}$, where the indexing $k$ of the $ \kappa_k(t)$  is over reactions in the reduced network and not the original network, and where for $y_k|_U \to  y_k'|_U \in \Re_U$
  \begin{equation}
    \kappa_k(t) = \sum_{\{z_i \to z_i' \in \Re \ : \  y_k|_U =
      z_i|_U \text{ and } y_k'|_U = z_i'|_U\}} \kappa_i
    \left(x(t)|_{U^c}\right)^{z_i|_{U^c}},
    \label{eq:new_rates}
  \end{equation}
  where  $x(t)$ is the solution to equation \eqref{eq:main} for the system $\{\S,\C,\Re,\K\}$.

We reiterate the fact  that based upon the above definitions, the differential equations
governing the dynamics of $x_i$ for $i \in U$ for the reduced
system  are exactly the
same as the differential equations for $x_i$ for $i \in U$ of the
original mass-action system.  In Section \ref{sec:results}, we will project the dynamics of a bounded trajectory onto the subset of the species that go to the boundary, thereby producing a trajectory that satisfies a system with bounded mass-action kinetics.  Note that  both the resulting reduced network and the bounds on the kinetics will depend upon the initial condition of the original system.

\section{Main results}
\label{sec:results}

We begin in Section \ref{sec:partition} by introducing the concept of partitioning a set of vectors along a sequence, which will be one of our main analytical tools and will allow us to group the relevant monomials of the dynamical system along sequences of trajectory points into classes with comparable growths.   In Section \ref{sec:results2} our main results will be stated and proved.  The main results will focus on non-autonomous systems because we will later project our system of interest, which is autonomous, to the non-autonomous system consisting of those species that approach the boundary along a subsequence of times.

\subsection{Partitioning vectors along a sequence}
\label{sec:partition}
We begin by recalling that for any vectors $u,v$ such that $u \in \R^N_{\ge 0}$ and $v\in \R^N$ we define $u^v \eqdef u_1^{v_1}\cdots u_N^{v_N}$, where we use the convention $0^0 = 1$.  All sequences and subsequences will be indexed by non-negative integers.  

\begin{definition}
  Let $\C$ denote a finite set of vectors in $\R^N$. Let $x_n \in
  \R^N_{> 0}$ denote a sequence of points in the strictly positive orthant.  We say that $\C$ is
   {\em partitioned along the sequence} $\{x_n\}$ if there exists $T_i \subset \C$, $i = 1, \dots,P$, termed \textit{tiers}, such that
    $T_i \ne \emptyset$, $T_i \cap T_j = \emptyset$ if $i \ne j$, and
    $\cup_i T_i = \C$ (that is, the tiers constitute a {\em partition} of
    $\C$), and a constant $C > 1$,  such that
  \begin{enumerate}[$(i)$]
  \item if $y_j,y_k \in T_i$ for some $i \in \{1,\dots,P\}$, then for all $n$
    \begin{equation*}
      \frac{1}{C} x_n^{y_j} \le x_n^{y_k} \le C x_n^{y_j},
    \end{equation*}
    which is equivalent to either of the conditions
    \begin{equation*}
    	  \frac{1}{C} \le       \frac{x_n^{y_k}}{x_n^{y_j}} \le C    \quad  \text{or}  \quad \frac{1}{C} \le  x_n^{y_k-y_j} \le C.
    \end{equation*}

  \item if $y_k \in T_i$ and $y_j \in T_{i+m}$ for some $m \in \{1,\dots,P-i\}$,
    then
    \begin{equation*}
      \frac{x_n^{y_k}}{x_n^{y_j}}  \to \infty, \quad
      \text{as } n \to \infty.
    \end{equation*}
  \end{enumerate}
  % In the above definition of $f_{j,k}$, if the complexes under
  % consideration come from a single reaction $y_k \to y_k'$, we will
  % often simply write $f_k = x^{y_k'}/x^{y_k}$.
  \label{def:partition}
\end{definition}
Therefore, we have a natural ordering of the tiers: $T_1 \succ T_2 \succ T_3 \succ \cdots \succ T_P$, and we say $T_1$ is the ``highest'' tier, whereas $T_P$ is the ``lowest'' tier.  

Throughout the paper, Definition \ref{def:partition} will be used in the context of the sequence $\{x_n\}$ being trajectory points, and the set of vectors $\C$ being the complex vectors.

%\begin{definition}
%  If $\{T_i\}$ is a partition of a set of vectors $\C$ such that each
%  $T_i$ consists of a single vector, then we say the partition is
%   {\em trivial}.
%\end{definition}

The following lemma, which is critical for our purposes, states that given a set of vectors and a sequence of points in $\R^N_{>0}$, there always exists a subsequence along which the vectors are partitioned.

\begin{lemma}
  Let $\C$ denote a finite set of vectors in $\R^N$.  Let $x_n$ be a
  sequence of points in $\R^N_{>0}$.  Then, there exists a subsequence of
  $\{x_n\}$ along which $\C$ is partitioned.
  \label{lem:partition}
\end{lemma}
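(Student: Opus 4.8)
The plan is to reduce the statement to the behaviour of finitely many scalar ratio sequences and to extract a single subsequence along which each of them converges in a suitable compactification. For $y,z \in \C$ and $n \ge 0$, set $\rho_n(y,z) \eqdef x_n^{y-z} = x_n^{y}/x_n^{z} \in (0,\infty)$, which is well defined since $x_n \in \R^N_{>0}$; these satisfy the identities $\rho_n(y,z) = \rho_n(y,w)\,\rho_n(w,z)$ and $\rho_n(z,y) = 1/\rho_n(y,z)$ for every $n$. Because $\C \times \C$ is finite and the extended half-line $[0,\infty]$ is sequentially compact, I would pass to a subsequence of $\{x_n\}$ finitely many times — once for each ordered pair — so that along the resulting subsequence $\rho_n(y,z) \to \rho(y,z) \in [0,\infty]$ for every $(y,z) \in \C\times\C$. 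Passing the two identities above to the limit, which is legitimate whenever no indeterminate product $0\cdot\infty$ occurs, is the only algebraic input the rest of the argument needs; and in each place where it is used below, the relevant factors are never of that indeterminate form.

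Next I would build the tiers from the relation on $\C$ obtained by declaring $y \sim z$ when $\rho(y,z) \in (0,\infty)$. By the reciprocal identity this relation is symmetric; it is reflexive since $\rho_n(y,y) \equiv 1$; and it is transitive since $\rho(y,w), \rho(w,z) \in (0,\infty)$ forces $\rho_n(y,z) = \rho_n(y,w)\rho_n(w,z) \to \rho(y,w)\rho(w,z) \in (0,\infty)$. Let $T_1,\dots,T_P$ be the equivalence classes; they partition $\C$ into nonempty sets. To order them, take distinct classes $A,B$ with representatives $y \in A$, $z \in B$: then $y \not\sim z$, so $\rho(y,z) \in \{0,\infty\}$, and this value does not depend on the representatives because replacing $y$ or $z$ by a same-tier vector multiplies $\rho_n(y,z)$ by a factor with positive finite limit. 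Declare $A$ to be higher than $B$ when $\rho(y,z) = \infty$. The reciprocal identity shows that exactly one of ``$A$ higher than $B$'', ``$B$ higher than $A$'' holds, and multiplicativity ($\infty\cdot\infty=\infty$) gives transitivity, so this is a strict total order; I then relabel the classes so that $T_1 \succ T_2 \succ \cdots \succ T_P$ with respect to it.

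It then remains to verify the two requirements of Definition~\ref{def:partition} and to choose the constant. For condition $(i)$: if $y_j,y_k$ lie in a common tier then $y_j \sim y_k$, so both $\rho_n(y_k,y_j)$ and $\rho_n(y_j,y_k) = 1/\rho_n(y_k,y_j)$ are convergent, hence bounded, sequences; since there are only finitely many same-tier ordered pairs, I choose $C > 1$ larger than every one of these bounds, which yields $\frac{1}{C} \le x_n^{y_k-y_j} \le C$ for all $n$. For condition $(ii)$: if $y_k \in T_i$ and $y_j \in T_{i+m}$ with $m \ge 1$, then by the chosen labeling $T_i$ is higher than $T_{i+m}$, so by definition $x_n^{y_k}/x_n^{y_j} = \rho_n(y_k,y_j) \to \rho(y_k,y_j) = \infty$, which is exactly condition $(ii)$.

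The whole argument amounts to extracting, by sequential compactness of $[0,\infty]$, one subsequence along which every ratio $\rho_n(y,z)$ converges, and then reading off the tiers and their order from the limits. The only point that genuinely needs care — and the closest thing to an obstacle — is checking that the two derived relations really are an equivalence relation and a strict total order; both checks reduce to the single identity $x_n^{y-z} = x_n^{y-w}\,x_n^{w-z}$, which converts additivity of the exponent differences into multiplicativity of the limiting ratios and lets one compose bounded and divergent behaviours along the subsequence.
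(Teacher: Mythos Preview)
Your proof is correct. The core idea is the same as the paper's --- extract a subsequence along which the monomial ratios converge in $[0,\infty]$ and group complexes by whether their ratio stays positive and finite --- but the organization is genuinely different. The paper first uses pigeonhole on the $r!$ possible orderings of $x_n^{y_1},\dots,x_n^{y_r}$ to fix a total order of the complexes, then builds the tiers iteratively by examining only the \emph{consecutive} ratios $\psi_i(x)=x^{y_i}/x^{y_{i+1}}$, passing to a further subsequence at each step; the tier ordering comes for free from the initial pigeonhole step. You instead pass to a subsequence once for every ordered pair using sequential compactness of $[0,\infty]$, then read off the tiers as the equivalence classes of $y\sim z \iff \rho(y,z)\in(0,\infty)$ and verify separately that the induced relation on classes is a strict total order. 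Your route is cleaner and more symmetric --- no preliminary ordering, no iterative construction --- at the cost of having to check well-definedness and transitivity of the order on classes, which the paper gets automatically. The paper's route is slightly more economical in that it only handles $r-1$ ratio sequences rather than all $r^2$, but this is of no real consequence since $\C$ is finite.
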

\begin{proof}
  Denote the elements of $\C$ as $y_i$, $1\le i \le r$, where $|\C| = r$.  Note that
  there are $r! < \infty$ ways to order the elements of $\C$.
  Therefore, there exists a reordering of the vectors in $\C$ such that the set of indices $n$ for which
  \begin{equation}
     x_n^{y_1} \ge x_n^{y_2} \ge \cdots \ge x_n^{y_r}
     \label{eq:ordering}
  \end{equation}
  holds is infinite; letting $n_k$ denote those indices, we obtain the corresponding subsequence $\{x_{n_k}\}$.
  Thus, we have instituted an ordering, $y_1 \succ y_2 \succ \cdots \succ y_r$ along this subsequence, and $y_1$ can be viewed as maximal. The goal now is to simply get more information about this ordering (along further subsequences) and ask which vectors stay ``close'' to each other, and which diverge.  This will give us the natural dividing lines for our tiers.
  
  For $i \in  \{1,\dots,r-1\}$, define $\psi_{i}:\R^N_{>0} \to \R$ by $\psi_{i}(x)\eqdef x^{y_i} / x^{y_{i+1}}.$   By the inequalities \eqref{eq:ordering}, for each $i\in \{1,\dots,r-1\}$ and for $k\ge 1$, we have $\psi_i(x_{n_k}) \ge 1$.  We will construct the tiers.  We begin by setting $T_1 = \{y_1\}$.  Next, we ask if 
  \begin{equation}
      \liminf_{k\to \infty} \psi_{1}(x_{n_{k}}) < \infty.
      \label{eq:lim_inf}
  \end{equation}
  If \eqref{eq:lim_inf} holds, we set $T_1 = \{y_1,y_2\}$ and redefine our sequence
  $\{x_{n_k}\}$ as an appropriate subsequence so that $\lim_{k\to \infty} \psi_{1}(x_{n_{k}})$ exists, and is finite.  Next, we ask if  
  \begin{equation*}
    \liminf_{k\to \infty} \psi_{2}(x_{n_{k}}) < \infty
   \end{equation*}
    along this new subsequence.  If so, we set $T_1 = \{y_1,y_2,y_3\}$ and redefine our sequence appropriately so that $\lim_{k\to \infty} \psi_{2}(x_{n_{k}})$ exists and is finite.  This process will either terminate with $T_1 = \C$ or when $\liminf_{k \to \infty} \psi_{b-1}(x_{n_k})  = \infty$ for some $b \in \{2,\dots, r\}$.  If such a $b$ exists, we have $T_1 = \{y_1,\dots,y_{b-1}\}$, and then we begin building the second tier by setting $T_2 = \{y_b\}$.    Now fill $T_2$ in the same
  manner that we did $T_1$ by looking at the values of
  $\liminf_{k\to \infty}\psi_i(x_{n_{k}})$ for the appropriate $i$'s.
  Repeat this process, always redefining the sequence as the
  subsequence guaranteed to exist at each step, until we have a
  sequence of tiers $T_1, T_2, \dots, T_P$.   
  
  It remains to find the appropriate $C>1$ for Definition \ref{def:partition}.  For each pair $y_j,y_{j+1} \in T_i$, we define $C_{i,j} \eqdef  \lim_{k \to \infty} x_{n_k}^{y_j}/x_{n_{k}}^{y_{j+1}}$, which exists and is finite by construction.   For each such $i,j$, let $\widetilde C_{i,j} = C_{i,j} + 1$ and note that there is a $K \ge 1$ so that
  \begin{align*}
  	1\le \frac{x_{n_k}^{y_j}}{x_{n_k}^{y_{j+1}}} \le \widetilde C_{i,j},
  \end{align*}
  for all $k \ge K$, and all relevant $i,j$ (that is, this bound is uniform in $i$ and $j$).  Let 
    \begin{equation*}
  	C \eqdef \max_{i \in \{1,\dots,P\}} \prod_{\{ j\ :\ y_j,y_{j+1}\in T_i\}}\widetilde C_{i,j}.
  \end{equation*}
  Then, restricting ourselves to the subsequence with $k\ge K$, 
  $\C$ is
  now partitioned along the resulting subsequence with tiers $\{T_i\}$, $i = 1,\dots,P$, and constant $C > 1$.
\end{proof}

The following lemma states that for any set of vectors in $\R^m$, either their span includes a non-zero vector in the non-positive orthant $\R^m_{\le 0}$, or there is vector normal to their span that intersects the strictly positive orthant.
\begin{lemma}[Stiemke's Theorem, \cite{Stiemke1915}]
  For $i = 1,\dots, n$, let $u_i \in \R^m$. Either there exists an $\alpha \in \R^n$ such that 
  \begin{equation*}
    \left(\sum_{i=1}^n \alpha_i u_i\right)_j \le  0, \quad j = 1,\dots,m
  \end{equation*}
  and such that at least one of the inequalities is strict,
  or there is a $w \in \R^m_{> 0}$ such that $w\cdot u_i=0$ for each
  $i\in \{1,\dots, n\}$.
  \label{lem:Stiemke}
\end{lemma}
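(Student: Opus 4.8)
The plan is to treat this as a classical theorem of the alternative, obtained from strict separation of a compact convex set from a disjoint closed convex set. First I would collect $u_1,\dots,u_n$ as the columns of an $m\times n$ matrix $A$, so that $\sum_{i=1}^n\alpha_i u_i = A\alpha$, and set $V = \{A\alpha : \alpha\in\R^n\}\subseteq\R^m$, the column space of $A$, which is a linear subspace of $\R^m$. The whole argument then turns on the dichotomy: either $V$ meets the nonpositive orthant $\R^m_{\le 0}$ at some point other than the origin, or it does not.

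In the first case, if there is $\alpha$ with $A\alpha\le 0$ componentwise and $A\alpha\ne 0$, then $(\sum_i\alpha_i u_i)_j\le 0$ for all $j$, and since a nonzero nonpositive vector has a strictly negative coordinate, at least one inequality is strict; this is exactly the first alternative. In the complementary case $V\cap\R^m_{\le 0}=\{0\}$, and since $V$ is a subspace this also gives $V\cap\R^m_{\ge 0}=\{0\}$. I would then introduce the standard simplex $\Delta = \{x\in\R^m_{\ge 0} : \sum_j x_j = 1\}$, which is nonempty, compact, convex, and disjoint from the closed set $V$ (because $0\notin\Delta$). By the strict separating hyperplane theorem there is $w\in\R^m$ with $\sup_{v\in V}w\cdot v < \inf_{x\in\Delta}w\cdot x$. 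Because $V$ is a subspace, the left-hand supremum is finite only if $w\cdot v = 0$ for every $v\in V$ (otherwise $w\cdot(tv_0)\to\infty$ along $V$), in which case it equals $0$; hence $w$ is orthogonal to $V$ and $\inf_{x\in\Delta}w\cdot x > 0$. Evaluating at the standard basis vectors $e_j\in\Delta$ gives $w_j = w\cdot e_j > 0$ for each $j$, so $w\in\R^m_{> 0}$, and $w\cdot u_i = 0$ for each $i$ since $u_i\in V$. This is the second alternative.

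The only substantive ingredient is the separating hyperplane theorem, which is standard finite-dimensional convex analysis. If a self-contained route is preferred, one can supply it directly here: take $x^*\in\Delta$ and $v^*\in V$ realizing the distance $d$ between $\Delta$ and $V$ (the map $x\mapsto \mathrm{dist}(x,V)$ is continuous and positive on the compact set $\Delta$, so $d>0$ is attained, with $v^*$ the orthogonal projection of $x^*$ onto $V$), set $w = x^*-v^*$, and check via the usual variational inequality along the segments $(1-t)x^*+tx\subseteq\Delta$ that $w\perp V$ and $w\cdot x\ge\|w\|^2 = d^2 > 0$ for all $x\in\Delta$. I do not expect a real obstacle; the one point to handle carefully is the subspace argument that forces $w\perp V$ (a separating functional unbounded on $V$ would contradict strict separation), and, if one wishes to record that the two alternatives are mutually exclusive, the observation that both holding would give $0 < w\cdot A\alpha = (A^{\top}w)\cdot\alpha = 0$.
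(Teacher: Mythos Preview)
Your argument is correct and is a standard modern proof of Stiemke's theorem via strict separation of a subspace from the probability simplex. The paper, however, does not supply a proof at all: the lemma is simply stated and attributed to the original 1915 reference, so there is no ``paper's own proof'' to compare against. Your route (column-space dichotomy plus separating hyperplane, with the subspace forcing $w\perp V$) is exactly the kind of self-contained argument one would give if a proof were required, and nothing in it is problematic.

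One small slip worth fixing: in your closing remark on mutual exclusivity you write that both alternatives holding would give $0 < w\cdot A\alpha$. In fact, with $w\in\R^m_{>0}$ and $A\alpha\le 0$ componentwise with at least one entry strictly negative, you get $w\cdot A\alpha < 0$, not $>0$. The contradiction with $w\perp V$ (hence $w\cdot A\alpha=0$) is of course unaffected.
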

%

% \begin{lemma}[Gordan's Theorem, \cite{Gordan1873}]
%   Let $u_i \in \R^N$ be a set of $m$ vectors. Either the set of
%   inequalities
%   \begin{equation*}
%     \left(\sum_{i=1}^m \alpha_i u_i\right)_j <  0, \quad j = 1,\dots,N
%   \end{equation*}
%   has a solution $\alpha \in \R^m$, or there is nonzero $w \in
%   \R^N_{\ge 0}$ such that $w\cdot u_i=0$ for each $i$.
%   \label{lem:Gordan}
% \end{lemma}

\begin{definition}
	Let $w \in \R^N$.  The set $\{i \in \{1,\dots,N\} \ : \ w_i \ne 0\}$ is called the  {\em support} of $w$.
\label{def:support}
\end{definition}

\begin{definition}
  Let $\C$ denote a finite set of vectors in $\R^N$. Let $\{T_i\}$
  denote a partition of $\C$.  Let $U \subset \{1,\dots,
  N\}$ be nonempty.  We say that the vector $w \in \R^N_{\ge 0}$ is a  {\em non-negative
  conservation relation that respects the pair} $(U,\{T_i\})$ if the
  following two conditions hold:
  \begin{enumerate}
  \item $w_i > 0$ if and only if $i \in U$.  That is, the \textit{support} of $w$ is $U$.
  \item Whenever $y_j,y_{\ell} \in T_i$
    for some $i$, we have that $w \cdot (y_j - y_{\ell}) = 0$.
  \end{enumerate}
\end{definition}

%In the above definition we may or may not have that $w\cdot (y_j - y_{\ell}) = 0$ if $y_j\in T_{i_1}$ and $y_{\ell}\in T_{i_2}$ for $i_1 \ne i_2$.  Also, 
Note that if each $T_i$ consists of a single element, then any vector $w$ whose support is $U$ satisfies the requirements of the definition as the second condition holds automatically. Also note that if $\C = T_1$, then the type of conservation relation described above is the usual concept in chemical reaction network theory.

If in the following theorem, $\C$ is taken to be the set of complexes of a reaction network, then the theorem guarantees that there must be a conservation relation among certain subsets of the reactions if a trajectory converges to the boundary of the positive orthant.

%
%\begin{theorem}
%  Let $\C$ denote a finite set of vectors in $\R^N$.  Let $x_n \in  \R^N_{>0}$ denote a sequence of points such that:
%  \begin{enumerate}
%  \item There is a $K>0$ such that $|x_n| \le K$ for all $n$,
%  \item $x_n \to \partial \R^N_{\ge 0}$ in that $\text{\em dist}(x_n, \partial \R^N_{\ge 0}) \to 0$, as $n \to \infty$.
%  % following sense: there is a sequence  $\epsilon_n\to 0$, as $n \to \infty$ such that for each $n\ge 1$, we have $x_{n,i} \le \epsilon_n$ for at  least one $i\in \{1,\dots,N\}$.
%  \end{enumerate} 
%  Let $\{x_{{n_k}}\}$ be any convergent subsequence of the sequence
%  with limit point $z$, say.  Let $U = U(z) =  \{i \in \{1,\dots,N\} \, : \,
%  z_i = 0\}$.  Finally, suppose that $\C$ is partitioned along
%  $\{x_{n_k}\}$ with tiers and constants $T_i, C_i$, for $i=1,\dots,
%  P$, respectively.  Then,  there is a
%  non-negative conservation relation $w \in \R^N_{\ge 0}$ that
%  respects the pair $(U,\{T_i\})$.
%  \label{thm:conservation}
%\end{theorem}

\begin{theorem}
  Let $\C$ denote a finite set of vectors in $\R^N$.  Let $x_n \in  \R^N_{>0}$ denote a sequence of points with $x_n \to z \in \partial \R^N_{\ge 0}$, as $n \to \infty$.
 Let $U = U(z) =  \{i \in \{1,\dots,N\} \, : \,
  z_i = 0\}$.  Finally, suppose that $\C$ is partitioned along
  $\{x_{n}\}$ with tiers $T_i$, for $i=1,\dots,
  P$, and constant $C>0$.  Then,  there is a
  non-negative conservation relation $w \in \R^N_{\ge 0}$ that
  respects the pair $(U,\{T_i\})$.
  \label{thm:conservation}
\end{theorem}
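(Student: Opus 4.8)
The plan is to produce $w$ directly from Stiemke's Theorem (Lemma~\ref{lem:Stiemke}), applied to the differences of tier-mates after projecting onto the coordinates in $U$. Since $z\in\partial\R^N_{\ge0}$ the index set $U=U(z)$ is nonempty; put $m=|U|\ge1$. First I would enumerate as $u_1,\dots,u_q\in\R^m$ all the vectors $(y_j-y_\ell)|_U$ with $y_j,y_\ell$ distinct elements of a common tier $T_i$, fixing for each $p$ a representing pair so that $u_p=(a_p-b_p)|_U$. Applying Lemma~\ref{lem:Stiemke} in $\R^m$ to $u_1,\dots,u_q$ offers two alternatives; the second one --- existence of $w'\in\R^m_{>0}$ with $w'\cdot u_p=0$ for all $p$ --- is exactly what we want, so the whole argument reduces to excluding the first alternative. (If every tier is a singleton there are no $u_p$ at all, the first alternative fails vacuously, and any positive $w'$ works, matching the remark after the definition; so assume at least one $u_p$ exists.)

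The main step, and the only place the hypotheses on $\{x_n\}$ are genuinely used, is ruling out the first Stiemke alternative. Suppose it held: there are scalars $\alpha_p$ with $\sum_p\alpha_p u_p\le0$ coordinatewise and strictly negative in some coordinate $i_0\in U$. Set $v=\sum_p\alpha_p(a_p-b_p)\in\R^N$, so that $v|_U=\sum_p\alpha_p u_p$. Because each pair $(a_p,b_p)$ sits inside one tier, part~$(i)$ of Definition~\ref{def:partition} gives $C^{-1}\le x_n^{a_p-b_p}\le C$ for every $n$, whence $x_n^{v}=\prod_p\bigl(x_n^{a_p-b_p}\bigr)^{\alpha_p}$ lies in the fixed interval $[\,C^{-\sum_p|\alpha_p|},\,C^{\sum_p|\alpha_p|}\,]$; in particular $x_n^v$ is bounded in $n$. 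On the other hand, writing $x_n^v=\bigl(\prod_{i\notin U}x_{n,i}^{v_i}\bigr)\bigl(\prod_{i\in U}x_{n,i}^{v_i}\bigr)$, the first factor converges to $\prod_{i\notin U}z_i^{v_i}\in(0,\infty)$ since $x_{n,i}\to z_i>0$ for $i\notin U$; in the second factor every $i\in U$ has $x_{n,i}\to0$ and $v_i\le0$, so eventually each term is $\ge1$ while the $i_0$ term tends to $+\infty$ because $v_{i_0}<0$. Hence $x_n^v\to\infty$, contradicting the bound, and the second alternative must hold.

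Finally I would lift $w'$ to $w\in\R^N$ by setting $w_{U[j]}=w'_j$ for each $j$ and $w_i=0$ for $i\notin U$. Then $w\ge0$ and its support is exactly $U$, giving condition~1 in the definition of a non-negative conservation relation respecting $(U,\{T_i\})$. For condition~2, if $y_j,y_\ell\in T_i$ then $(y_j-y_\ell)|_U$ is one of the $u_p$ (or is $0$ when $j=\ell$), so $w\cdot(y_j-y_\ell)=w'\cdot(y_j-y_\ell)|_U=0$. Thus $w$ is the desired conservation relation. I do not foresee any serious obstacle beyond the bookkeeping of the projection to $U$ and keeping the Stiemke index range disjoint from the sequence index $n$; the analytic content is entirely contained in the short boundedness-versus-blowup contradiction of the middle paragraph.
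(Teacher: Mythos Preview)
Your proposal is correct and follows essentially the same route as the paper: apply Stiemke's Theorem to the projections onto $U$ of the within-tier differences, and rule out the first alternative by the bounded-versus-blowup contradiction coming from condition~$(i)$ of Definition~\ref{def:partition} together with $x_{n,i}\to 0$ for $i\in U$. The only cosmetic differences are that the paper frames the argument as a proof by contradiction and, to cope with non-injectivity of the projection $\cdot|_U$, carries all of $W$ with multiplicity weights $m_k$ inside an auxiliary function $M(x)=\prod_{u_k\in W}(x^{u_k})^{c_k/m_k}$ before taking logarithms, whereas you fix a single representing pair $(a_p,b_p)$ for each projected difference and work directly with $x_n^{v}$; the analytic content is identical.
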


\begin{proof}
  %Note that by condition 1., there is such a convergent subsequence and by condition 2., $U$ is nonempty.  
  We suppose, in order to find a contradiction, that there is no non-negative
  conservation relation that respects the pair $(U,\{T_i\})$.
  % Let $m = |U|$ be the number of elements in $U$. For each $y_k \in
  % C$, we let $\overline y_k = y_k|_U$, and then define $\overline \C
  % = \{\overline y_k\}$ and $\overline x_n = x_n|_U$.  For $i \in
  % \{1,\dots,P\}$, define
  % \begin{equation*}
  %   \overline T_i = \{v \in \R^m\, : \, v = y_k|_U \text{ for some }
  %   y_k \in T_i\}.
  % \end{equation*}
  % Note (maybe prove), that $\overline \C$ is partitioned along
  % $\overline x_n$ with tiers $\{\overline T_i\}_{i = 1}^P$ (note we
  % must have the same number of tiers).  Note (again, should pull
  % this out) that the partition $\{\overline T_i\}$ is non-trivial,
  % actually, I think it could be.  Which is good.
  Define the sets $W_i\subset \R^N$, for $i = 1,\dots P,$ and $W \subset \R^N$  via
  \begin{equation*}
    W_i \eqdef   \{y_{j} - y_{k}\ | \
    y_{j},
    y_{k} \in T_i \} , \qquad W \eqdef \bigcup_{i = 1}^{P} W_i,
  \end{equation*}
  and denote the elements of $W$ by $\{u_k\}$.  Note that if $T_i$ consists of a single element, then $W_i$ consists solely of the vector $\vec 0$.  Let $m = |U| >0$ be the number of elements
  in $U$ and let $W_i|_U \subset \R^m$  and $W|_U \subset \R^m$
   be the restrictions of $W_i$ and $W$ to the
  components associated with the index set $U$, as discussed in Section \ref{sec:reduced_networks}.  Denote the elements of $W|_U$ by
  $\{v_k\}$.  Thus, collecting terminology, $u_k \in \R^N$, whereas $v_k\in \R^m$, and for each $u_k\in W$, there is a
  corresponding $v_k \in W|_U$ for which $u_k|_U = v_k$, however the  mapping $\cdot |_U$ need not be injective.
  
  The set $W|_U$ must contain at least one nonzero vector because otherwise any non-negative vector with support $U$ would be a non-negative conservation relation that respects the pair $(U,\{T_i\})$, but we have assumed that no such relation exists.

  Because we have assumed
  there is no conservation relation that respects the pair
  $(U,\{T_i\})$, we may conclude by Lemma \ref{lem:Stiemke} that
  $\text{span}(W|_U)$ must intersect $\R^m_{\le 0}$ in a
  non-trivial manner.  That is, there exist $c_{k} \in \R$ such that
  \begin{equation}
    \left( \sum_{v_k \in W|_U}c_{k}
      v_{k}\right)_j \le 0,  
    \label{app:stiemke}
  \end{equation}
  for each $j \in \{1,\dots,m\}$, and such that the inequality is strict for
  at least one $j$.

  For $v_k \in W|_U$, let $m_{k}$ denote the number of vectors of $W$ that reduce to it (recall that the operation $\cdot|_U$ need not be injective).  Define the function $M:\R^N_{\ge 0} \to \R$ by
  \begin{equation*}
    M(x) \eqdef  \prod_{u_k \in W}
      \left(x^{u_k} \right)^{c_{k}/m_{k}},
  \end{equation*}
  where $c_{k}$ and $m_{k}$ are chosen for $u_k \in W$ if $u_k|_U = v_k\in W|_U$.  
  Note that, by construction and by the definition of partitioning along a sequence, if $u_k \in W$, then there are $y_j,y_{\ell} \in T_i$ for some $i$, such that $u_k = y_{\ell} - y_{j}$ and
  \begin{equation*}
    \frac{1}{C} \le x_{n}^{u_k} =  \frac{x_{n}^{y_{\ell}}}{x_{n}^{y_j}} \le C,
  \end{equation*}
  for all $n\ge 1$.  Therefore, the sequence $M(x_{n})$ is uniformly, in $n$, bounded both from 
  above and below.    Noting that each $x_n$ has strictly positive components, we may take logarithms and find
  \begin{align*}
    \ln(M(x_{n})) = \bigg( \sum_{u_k\in W}
    \frac{c_{k}}{m_{k}} u_k \bigg) \cdot \ln x_{n},
  \end{align*}
  where for a vector $u \in \R^N_{>0}$ we define
  \begin{align*}
  	\ln(u) \eqdef (\ln(u_1),\cdots, \ln(u_N)).
  \end{align*}
  Expanding along elements of $U$ and $U^c$ yields,
  \begin{align}
  \begin{split}
     \ln(M(x_{n})) &= \bigg( \sum_{v_k\in W|_U} c_{k} v_k \bigg) \cdot \ln (x_{n}|_U) +  \bigg(  \sum_{u_k\in W} \frac{c_{k}}{m_{k}} u_k|_{U^c} \bigg) \cdot \ln (x_{n}|_{U^c}).
     \end{split}
     \label{eq:bound}
  \end{align}
  By construction, $x_{n,\ell}$ is bounded from both above and below
  for $\ell \in U^c$. Thus, the second term in \eqref{eq:bound} is bounded from above and below. By the inequality \eqref{app:stiemke}, where at least one term is strictly negative, and the fact that $x_{n,j} \to 0$ for \textit{each} $j \in U$ along this sequence, we may conclude that the first term, and hence $\ln(M(x_{n}))$ itself, is unbounded towards $+\infty$, as $n \to \infty$.  This is a
  contradiction with the previously found fact that the sequence $M(x_{n})$ is
  uniformly bounded above and below, and the result is shown.
\end{proof}

\subsection{Persistence and the Global Attractor Conjecture in the single linkage class case}
\label{sec:results2}
%
%We now return our attention to a weakly reversible chemical reaction network with mass-action kinetics, $\{\S,\C, \Re, \K\}$.  Letting $\phi(t,x_0)$ denote a trajectory with initial condition $x_0\in \R^{|S|}_{>0}$ and $\omega(x_0)$ the set of $\omega$-limit points, we will assume that the system satisfies the following two conditions
%\begin{enumerate}
%  \item $\phi(t,x_0)$ is bounded in $t$ for any $x_0 \in \R^{|S|}_{>0}$, and
%  \item $\omega(x_0)$ is either completely contained in $\partial
%    \R^{|\S|}_{\ge 0}$ or completely contained within the interior of $ \R^{|\S|}_{>
%      0}$.
%  \end{enumerate}
%We note that if $\{\S,\C, \Re\}$ is weakly reversible and has a deficiency of zero, or, more generally, if $\{\S,\C, \Re, \K\}$ is complex-balanced, then the two conditions are automatically satisfied \cite{FeinbergLec79, Horn72,
%  HornJack72}.  Our goal of this section is to show that if the reaction diagram of $\{\S,\C,\Re\}$ consists of a single linkage class, then the system is persistent in the sense of Definition \ref{def:persistence}.  Thus, in the complex-balanced case, we would have that the conclusion of the Global Attractor Conjecture holds.  
%  
%  

For any $\overline x \in \R^N_{>0}$, define $V_{\overline
  x}:\R^N_{>0} \to \R_{\ge 0}$  by
\begin{equation}
  V_{\overline x}(x) \eqdef \sum_{i = 1}^N \left[ x_i (\ln(x_i) -\ln(\overline
  x_i) - 1) + \overline x_i \right].
  \label{eq:Lyapunov} 
\end{equation}
For $x\in \partial \R^N_{\ge 0}$, define $V_{\overline x}(x)$ via the continuous extension of \eqref{eq:Lyapunov}.
This is the standard Lyapunov function of chemical
reaction network theory \cite{FeinbergLec79, Gun2003} and has commonly been used in the study of  complex-balanced systems where $\overline x$ is typically taken to be a complex-balanced equilibrium.  However, we emphasize that in the current setting $\overline x$ need \textit{not} be a complex balanced equilibrium.   Note that $\nabla V_{\overline x}(x) = \ln x - \ln \overline x$. 
It is relatively straightforward to show that for any $\overline x \in \R^N_{>0}$, $V_{\overline x}$ is convex with a global minimum of zero at $\overline x$ \cite{FeinbergLec79}.

\vspace{.125in}

% It gives a surprising result in that it says that in many instances, whenever condition $C2$ does not hold, see Lemma \cite{lem:main}, the above defined $V_{\overline x}$ acts as a Lyapunov function for \underline{any} $\overline x \in \R^N_{>0}$, whereas in the classical theory it only acts as a Lyapunov function for special $\overline x$ (the complex-balanced equilibria).  This is especially surprising considering that in the following theorem there is no major assumption other than weak-reversibility.

The outline of the technical arguments to come is as follows.  In Lemma \ref{lem:main} we will show that for a trajectory of a non-autonomous, weakly reversible system with bounded kinetics to approach the boundary, at least one of two conditions, $C1$ or $C2$, must hold.  In Lemma \ref{lem:no_union} we will show that condition $C2$ can not hold for trajectories of systems of interest in this paper.  Condition $C1$ of Lemma \ref{lem:main} will then essentially tell us that a whole {\em family} of Lyapunov functions decrease along the trajectory.  This is a substantially stronger condition than having a single Lyapunov function decrease along a trajectory, and will eventually allow us, with the help of Lemma \ref{lem:single}, to overcome the fact that $V_{\overline x}(x)$ does not diverge to $+\infty$ as $x \to \partial \R^N_{>0}$, which has been the technical sticking point to a proof of the Global Attractor Conjecture in the past.  

\begin{lemma}
  Let $\{\S,\C,\Re,\K(t)\}$, with $\S = \{S_1,\dots, S_N\}$, be a weakly reversible,
  non-autonomous mass-action system with bounded kinetics.    For $t\ge 0$, let $x(t) = \phi(t,x_0)$ be the solution to the system with initial condition $x_0$.
  Suppose $x_0 \in \R^N_{>0}$ is such that $\phi(t,x_0)$ remains bounded and $\text{\em dist}(\phi(t,x_0),\partial \R^N_{\ge 0}) \to 0$, as $t \to \infty$.
  %, and $\omega(\phi(t,x_0))\cap \partial \R^N_{> 0} \ne \emptyset$.  
  Then at least one of the following two conditions hold for this trajectory:
  
  \begin{enumerate}
  
  \item[C1:] For any $\overline x \in \R^N_{>0}$, there exists a
    $T = T_{\overline x} >0$ such that $t>T$ implies
    \begin{equation*}
      \frac{d}{dt}V_{\overline x}(x(t)) =  
      \sum_k \kappa_k(t) x(t)^{y_k}(y_k' - y_k)\cdot (\ln(x(t)) - \ln(\overline x))< 0.
    \end{equation*}

  \item[C2:] There exists a sequence of times, $t_n \to \infty$, such that $x_n \eqdef \phi(t_n,x_0) \in
    \R^{N}_{>0}$ converges to a point $z\in \omega(\phi(\cdot ,x_0))
    \cap \partial \R^N_{\ge 0},$ and
    \begin{enumerate}[$(i)$]
    \item $\C$ is partitioned along $x_n$ with tiers
      $\{T_i\}_{i=1}^P$, and constant $C$, and
    \item $T_1$ consists of a union of linkage classes.
    \end{enumerate}
  \end{enumerate}
  \label{lem:main}
\end{lemma}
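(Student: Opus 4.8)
The plan is to establish the contrapositive: if condition C1 fails, then condition C2 holds. Recalling $\nabla V_{\overline x}(x) = \ln x - \ln\overline x$, for every $\overline x\in\R^N_{>0}$ one has
\[
  \frac{d}{dt}V_{\overline x}(x(t)) = \sum_k \kappa_k(t)\, x(t)^{y_k}(y_k'-y_k)\cdot(\ln x(t) - \ln\overline x).
\]
If C1 fails there is an $\overline x\in\R^N_{>0}$ and a sequence $t_n\to\infty$ along which this quantity is $\ge 0$. Since $\phi(\cdot,x_0)$ is bounded, $\text{dist}(\phi(t,x_0),\partial\R^N_{\ge 0})\to 0$, and positivity is preserved so that $x_n\eqdef\phi(t_n,x_0)\in\R^N_{>0}$, I would pass to a subsequence with $x_n\to z$ for some $z\in\omega(\phi(\cdot,x_0))\cap\partial\R^N_{\ge 0}$, and set $U=U(z)$. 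Passing to a further subsequence, Lemma \ref{lem:partition} provides tiers $\{T_i\}_{i=1}^P$ and a constant $C$ along which $\C$ is partitioned, and the inequality $\frac{d}{dt}V_{\overline x}(x(t_n))\ge 0$ persists along this subsequence. This yields part $(i)$ of C2, so it remains to prove part $(ii)$: that $T_1$ is a union of linkage classes.

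Suppose $T_1$ is not a union of linkage classes. Then some linkage class $L$ satisfies $\emptyset\ne L\cap T_1\subsetneq L$, and by weak reversibility (Definition \ref{def:WR}) $L$ is strongly connected; hence there is a reaction $y_{k_0}\to y_{k_0}'\in\Re$ with $y_{k_0}\in T_1$ and $y_{k_0}'\notin T_1$, for otherwise every edge out of $L\cap T_1$ would stay in $L\cap T_1$, contradicting that $L$ is strongly connected with $L\setminus T_1\ne\emptyset$. By part $(ii)$ of Definition \ref{def:partition}, $x_n^{y_{k_0}}/x_n^{y_{k_0}'}\to\infty$, so
\[
  B_n \eqdef (y_{k_0}-y_{k_0}')\cdot(\ln x_n - \ln\overline x) = \ln\!\big(x_n^{y_{k_0}}/x_n^{y_{k_0}'}\big) - (y_{k_0}-y_{k_0}')\cdot\ln\overline x \;\longrightarrow\; +\infty .
\]

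The core step is to show $\frac{d}{dt}V_{\overline x}(x(t_n)) < 0$ for all large $n$, contradicting the above. Write $\ell_n = \ln x_n - \ln\overline x$, $\mu_n = \max_{y\in\C}x_n^y>0$, and $D = \max_k|(y_k-y_k')\cdot\ln\overline x|$, and split the sum over $k$ by the sign of $(y_k'-y_k)\cdot\ell_n$. For indices with $(y_k'-y_k)\cdot\ell_n\ge 0$, using $\kappa_k(t_n)<1/\eta$ and $x_n^{y_k'}\le\mu_n$ bounds the $k$-th term by $\tfrac1\eta x_n^{y_k}\big(\ln(\mu_n/x_n^{y_k})+D\big) = \tfrac{\mu_n}{\eta}\,r_{k,n}\big(\ln(1/r_{k,n})+D\big)$ with $r_{k,n}=x_n^{y_k}/\mu_n\in(0,1]$; since $r\mapsto r\ln(1/r)$ is bounded on $(0,1]$ and vanishes at $0$, and since $r_{k,n}\ge 1/C$ when $y_k\in T_1$ while $r_{k,n}\to0$ when $y_k\notin T_1$ (both for large $n$, by the partition), the sum of these terms is at most $C'\mu_n$ for a constant $C'$ and all large $n$. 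For indices with $(y_k'-y_k)\cdot\ell_n<0$, every term is $\le 0$, so I keep only $k_0$ (which lies in this group once $B_n>0$): since $\kappa_{k_0}(t_n)>\eta$ and $x_n^{y_{k_0}}\ge\mu_n/C$ for large $n$, its contribution is at most $-\tfrac\eta C\mu_n B_n$. Hence $\frac{d}{dt}V_{\overline x}(x(t_n))\le\mu_n\big(C'-\tfrac\eta C B_n\big)<0$ once $B_n>C'C/\eta$, i.e.\ for all large $n$ — a contradiction. Therefore $T_1$ is a union of linkage classes and C2 holds.

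I expect the only genuinely delicate point to be this last estimate: individual monomials $x_n^{y_k}$ and the logarithmic weights $\ln(\mu_n/x_n^{y_k})$ can behave erratically, but normalizing all monomials by the largest one, $\mu_n$, and using $r\ln(1/r)\to0$ controls the non-decreasing part of $\frac{d}{dt}V_{\overline x}$, while the (possibly large, negative) remaining terms are simply discarded since only an upper bound is needed. Everything else — the subsequence extractions, the application of Lemma \ref{lem:partition}, and the graph-theoretic argument producing $y_{k_0}\to y_{k_0}'$ — should be routine.
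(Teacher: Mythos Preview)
Your proof is correct and takes a genuinely different --- and simpler --- route from the paper's.

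Both arguments begin identically: assume C1 fails, extract a convergent subsequence $x_n\to z\in\partial\R^N_{\ge 0}$ along which the derivative is $\ge 0$, partition $\C$ into tiers, and aim to show that if $T_1$ is not a union of linkage classes then the derivative must eventually be negative. The divergence is in how the potentially positive summands are controlled.

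The paper splits the sum by tier transitions $\{i\to i\}$, $\{i\to i+m\}$, $\{i\to i-m\}$, and handles each ``up'' reaction $y_0\to y_0'$ (source in $T_i$, product in $T_{i-m}$) individually: using weak reversibility it builds a chain $r_1,\dots,r_b$ of ``down'' reactions whose combined contribution (via a telescoping product of monomial ratios) dominates the $y_0\to y_0'$ term. This path construction and the associated inequalities are the heart of the paper's proof.

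You bypass this entirely. By normalizing all monomials by $\mu_n=\max_{y\in\C}x_n^y$, each nonnegative summand is bounded by $\tfrac{\mu_n}{\eta}\big(r\ln(1/r)+rD\big)$ with $r=x_n^{y_k}/\mu_n\in(0,1]$; since $r\ln(1/r)\le 1/e$ uniformly, the total nonnegative part is at most $C'\mu_n$. A single reaction $y_{k_0}\to y_{k_0}'$ with source in $T_1$ and product outside $T_1$ then contributes at most $-\tfrac{\eta}{C}\mu_n B_n$ with $B_n\to\infty$, overwhelming $C'\mu_n$. Weak reversibility is invoked only once, to produce that single reaction.

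What each approach buys: the paper's argument gives finer, reaction-by-reaction control and makes the role of weak reversibility more visible throughout the tier structure; your argument is shorter, uses only the elementary bound on $r\ln(1/r)$, and avoids the path machinery altogether. For the statement at hand the extra information in the paper's proof is not used downstream, so your simplification costs nothing. One minor remark: your parenthetical distinguishing $y_k\in T_1$ from $y_k\notin T_1$ when bounding $r_{k,n}\big(\ln(1/r_{k,n})+D\big)$ is unnecessary --- the bound $r\ln(1/r)+rD\le 1/e+D$ holds uniformly on $(0,1]$, so the tier of $y_k$ plays no role there.
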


\begin{proof}
  We suppose condition $C1$ does not hold, and will conclude that condition $C2$ must then hold.  
  Because condition $C1$ does not hold, there is an $\overline x\in \R^N_{>0}$ and a sequence $t_n \to \infty$ such that
  \begin{align}
    \sum_k \kappa_k(t_n) x_n^{y_k}(y_k' - y_k) \cdot (\ln (x_n) -
    \ln(\overline x)) \ge 0,
    \label{eq:bound1}
  \end{align}
  where $x_n = \phi(t_n,x_0)$. We now fix $\overline x$.   
  
  Combining   $\text{dist}(\phi(t_n,x_0),\partial \R^N_{\ge 0}) \to 0$, as $t_n \to \infty$, with  the boundedness of the trajectory allows us to conclude that there exists a
  convergent subsequence of $\{x_n\}$, which we take to be the
  sequence itself, with limit point $z \in \omega(\phi(\cdot,x_0))
  \cap \partial\R^N_{>0}$.  Note that by construction the inequality \eqref{eq:bound1} holds for
  all $x_n$ of the subsequence.    Applying Lemma \ref{lem:partition}, we partition the complexes along an
  appropriate subsequence of the sequence, which we will again denote $\{x_n\}$, with tiers $T_i$, $i = 1,\dots,P$, and constant $C>1$.  
  
  In the following, for tier $i\in \{1,\dots,P\}$, we denote by 
  \begin{itemize}
  \item $\{i \to i\}$ all reactions with both source and product complex in $T_i$,
  \item $\{i \to i + m\}$ all reactions with source complex in $T_i$ and product complex in $T_{i+m}$ for $m \in \{1,\dots,P-i\}$,
  \item $\{i \to i - m\}$ all reactions with source complex in $T_i$ and product complex in $T_{i-m}$ for $m \in \{1,\dots , i-1\}$.
  \end{itemize}
  Defining $u/v \eqdef (u_1/v_1,\dots, u_N/v_N)$ for $u,v \in \R^N_{>0}$, we may re-write the left hand side of the inequality  \eqref{eq:bound1}
  \begin{align}
    \sum_k \kappa_k(t_n) x_{n}^{y_k}(y_k' - y_k)\cdot \ln \left(\frac{x_n}{\overline x} \right) 
    %&= \sum_{i = 1}^P\bigg[ \sum_{\{i
%      \to i\}} \kappa_k(t_n) x_{n}^{y_k}(y_k' - y_k)\cdot \ln\left(
%      \frac{x_n}{\overline x}\right) \notag \\
%    &\hspace{.2in} + \sum_{m=1}^{P-i} \sum_{\{i \to i + m\}} \kappa_k(t_n)
%    x_{n}^{y_k}(y_k' - y_k)\cdot \ln\left( \frac{x_n}{\overline
%        x}\right) \notag \\
%    &\hspace{.2in} + \sum_{m=1}^{i-1} \sum_{\{i \to i - m\}} \kappa_k(t_n)
%    x_{n}^{y_k}(y_k' - y_k)\cdot \ln\left( \frac{x_n}{\overline
%        x}\right) \bigg]\notag \\
    &= \sum_{i = 1}^P\bigg[ \sum_{\{ i \to i \}} \kappa_k(t_n)
    x_{n}^{y_k}\bigg[ \ln \left(\frac{x_n^{y_k'}} {x_n^{y_k}}\right)
      +c_k\bigg]\label{eq:1} \\
    &\hspace{.2in}+ \sum_{m=1}^{P-i} \sum_{\{ i \to i + m\}} \kappa_k(t_n)
    x_{n}^{y_k}\bigg[ \ln \left(\frac{x_n^{y_k'}} {x_n^{y_k}}\right)
      +c_k\bigg]\label{eq:2} \\
    &\hspace{.2in} + \sum_{m=1}^{i-1} \sum_{\{i \to i - m\}} \kappa_k(t_n)
    x_{n}^{y_k}\bigg[ \ln \left(\frac{x_n^{y_k'}} {x_n^{y_k}}\right)
      +c_k\bigg]\bigg],\label{eq:3}
  \end{align}
  where for the $k$th reaction $c_k \eqdef  \ln({\overline x}^{y_k}/{\overline
          x}^{y_k'}) = -(y_k' - y_k)\cdot \ln \overline x$. 
  Note that $\sup_k|c_k| < \infty$ because $\overline x$ is fixed.  Note also that, by construction, for large enough $n$ any component in the enumeration \eqref{eq:2} is negative, and, in fact, $\ln(x_n^{y_k'}/x_n^{y_k}) \to -\infty$ as $n \to \infty$, for these terms.   We will now show that the total summation above (that is, the left hand side of \eqref{eq:1} and, hence, \eqref{eq:bound1}) must also, for large enough $n$, be strictly negative unless condition $C2$ holds.  This will then conclude the proof as it shows ``not $C1$ $\implies$ $C2$.''

  Suppose condition $C2$ does not hold.  Then, for the specific partition we have along $\{x_n\}$, it must be that $T_1$ \textit{does not} consist of a union of linkage classes.  Thus, by the weak reversibility of the system 
  %(and the fact that no linkage class can consist of a single complex), 
  there must be at least one reaction, $y_{k1} \to y_{k1}'$ say, such that $y_{k1} \in T_1$ and $y_{k1}' \in T_j$ for $j \ge 2$.  That is, there is a reaction  being enumerated in \eqref{eq:2} with $i = 1$ and $m \ge 1$.  As noted above, we have by construction that $\ln(  x_n^{y_{k1}'}/x_n^{y_{k1}}) \to - \infty,$  as $n \to \infty$.  Further, there is a $C>1$ such that for any $y_j \in \C$, $x_n^{y_{k1}} \ge (1/C) x_n^{y_j}$ for all $n$.  Finally, the terms $\ln(x_n^{y_k'}/x_n^{y_k})$ on the right hand side of  \eqref{eq:1} are uniformly bounded in $n$ and $k$.  Combining these ideas shows that for $n$ large enough
  \begin{align}
     \frac{1}{2}\kappa_{k1}(t_n)    x_{n}^{y_{k1}}\bigg[ \ln \left(\frac{x_n^{y_{k1}'}} {x_n^{y_{k1}}}\right)
      +c_{k1}\bigg] + \sum_{i = 1}^P \sum_{\{ i \to i \}} \kappa_k(t_n)
    x_{n}^{y_k}\bigg[ \ln \left(\frac{x_n^{y_k'}} {x_n^{y_k}}\right)
      +c_k\bigg] \le - q_{1,n} x_n^{y_{k1}},
      \label{eq:dom}
  \end{align}
  where $q_{1,n} \to \infty$ as $n \to \infty$.  Thus, we have used one half of a single term enumerated in \eqref{eq:2} to bound every term enumerated on the right hand side of \eqref{eq:1}.

  Since we already know that the terms in \eqref{eq:2} are all strictly negative for large enough $n$, all that remains to show is that the terms in \eqref{eq:3}, which are all positive for large enough $n$, are also dominated, in a similar manner as \eqref{eq:dom}, by some terms in \eqref{eq:2}, as $n \to \infty$.
  
  Pick a reaction from  \eqref{eq:3}, $y_0 \to y_0'$, say.  Suppose that the source of the reaction is  a complex in tier $T_i$, and the product is in tier $T_{i-m}$ for some $m>0$.  By the weak reversibility of the network, there is a series  of reactions beginning with $y_0'$ and ending with $y_0$ such that no reaction is enumerated more than once (that is, there is a path along the directed diagram with no $y_k \to y_k'$ used multiple times).  We now claim that there must be a subset of these reactions, enumerated as $r_1,\dots, r_b$, satisfying the following conditions (below, we denote the tier of the source complex for reaction $r_j$ as $T_{d_{j,s}}$ and the tier for the corresponding product complex as $T_{d_{j,p}}$):
  \begin{enumerate}
  	\item %The source complex of reaction $r_1$ is in tier $T_{d_{1,s}}$ with 
	$d_{1,s} \le i-m$.
	\item For $\ell \in \{1,\dots, b\}$, the source complex of $r_{\ell}$ is in a strictly higher tier than the corresponding product complex; that is, $d_{\ell,s} < d_{\ell,p}$.
	\item For $\ell \ge 2$, we have $d_{\ell,s} \le d_{\ell-1,p}$.
	\item %The product complex of reaction $r_b$ is in a tier that is lower than or equal to tier $i$; that is, 
	$d_{b,p} \ge i$.  
	\item For $\ell \in \{1,\dots, b\}$, we have $d_{\ell,s} < i$.
  \end{enumerate}
  Note that, for example, the series of reactions $r_1,\dots, r_b$ above can be constructed from the original series by only taking the first $b$ reactions for which the source is in a strictly higher tier than the product, but stop (i.e. pick $b$) once a product complex is in a tier that is equal to or lower than that of $y_0$.  If  $y_0 \to y_0'$ is a reversible reaction, then we may take $b=1$ with the reaction $r_1$ simply being the reverse reaction $y_0' \to y_0$ from tier $T_{i-m}$ to tier $T_i$. 
  
  By condition 2 we know that each of the reactions $r_1,\dots,r_b$ are enumerated in \eqref{eq:2}. %By condition 5 we know that $x_n^{y_{r_{\ell}}}/x_n^{y_0} \to \infty, \quad \text{as } n\to \infty,$  for each $\ell \in \{1,\dots,b\}$.  
  Let $d_0 = \max_{\ell}\{d_{\ell,s}\}$, and let $y_{d_0} \in T_{d_0}$ be a choice of complex from tier $T_{d_0}$. Note that $d_0 < i$ by condition 5, and so 
  \begin{equation}
  x_n^{y_{d_0}}/x_n^{y_0} \to \infty, \quad \text{as } n \to \infty.
  \label{eq:bound5}
 \end{equation}
   By construction and an application of the triangle inequality, there is a constant $C_1>0$ such that for $n$ large enough (so that the $\ln$ terms dominate the $c_{r_{\ell}}$ terms)
  \begin{align}
  	\bigg| \sum_{\ell = 1}^b  \kappa_{r_{\ell}}(t_n) x_n^{y_{r_{\ell}}}  \bigg[ \ln \bigg(\frac{x_n^{y_{r_{\ell}}'}} {x_n^{y_{r_{\ell}}}}\bigg)
      +c_{r_{\ell}}\bigg] \bigg| 
      &\ge  \eta C_1 x_n^{y_{d_0}} \bigg[\ln \bigg( \prod_{\ell = 1}^b \frac{x_n^{y_{r_{\ell}}}} {x_n^{y'_{r_{\ell}}}} \bigg) - \bigg| \sum_{\ell = 1}^b c_{r_{\ell}}\bigg| \bigg],\label{eq:product}
  \end{align}
  where the apparent ``flip'' of the ratio comes from taking the absolute value of a negative term, and where $\eta>0$ is the parameter used to bound all the functions $\kappa_k(t)$:  $\eta < \kappa_k(t) < 1/\eta$, for all $t \ge0$.
  By condition 2 in the construction above we have that each term in the product on the right hand side of \eqref{eq:product} goes to $+\infty$, as $n\to \infty$, and hence the entire product does.  Further, by conditions 1, 3, and 4 above, there is a $C_2>0$ such that 
  \begin{equation*}
  	\prod_{\ell = 1}^b \frac{x_n^{y_{r_{\ell}}}} {x_n^{y'_{r_{\ell}}}} \ge C_2 \frac{x_n^{y_0'}}{x_n^{y_0}},
  \end{equation*}
  uniformly in $n$. 
  % In fact, if in the construction above, we have that $d_{b+1} > i$, then the product asymptotically dominates $x_n^{y_0'}/x_n^{y_0}$.  
  We now may conclude that there is a $C_3>0$ so that
  \begin{align}
  \bigg[\ln \bigg( \prod_{\ell = 1}^b \frac{x_n^{y_{r_{\ell}}}} {x_n^{y'_{r_{\ell}}}} \bigg) - \bigg| \sum_{\ell = 1}^b c_{r_{\ell}}\bigg| \bigg] \ge C_3 \ln ( x_n^{y_0'}/ x_n^{y_0})  +c_0,
  \label{bnd}
  \end{align}
  for all $n$.  Let $M$ be the number of reactions enumerated in \eqref{eq:3} over all $i \in\{1,\dots,P\}$. Combining \eqref{bnd} with \eqref{eq:product} and \eqref{eq:bound5}, we see that for $n$ large enough
  \begin{equation}
   \frac{1}{2M} \sum_{\ell = 1}^b  \kappa_{r_{\ell}}(t_n) x_n^{y_{r_{\ell}}}  \bigg[ \ln \bigg(\frac{x_n^{y_{r_{\ell}}'}} {x_n^{y_{r_{\ell}}}}\bigg)
      +c_{r_{\ell}}\bigg] + \kappa_0(t_n)
    x_{n}^{y_0}\bigg[ \ln \left(\frac{x_n^{y_0'}} {x_n^{y_0}}\right)
      +c_0\bigg] \le - q_{2,n} x_n^{y_{d_0}},
      \label{eq:final}
 \end{equation}
 where $q_{2,n} \to \infty$,  as $n \to \infty$. Note that  the first term on the left hand side of \eqref{eq:final} is the left hand side of \eqref{eq:product} divided by $2M$, and the second term is the summand of \eqref{eq:3} associated with the reaction $y_0 \to y_0'$.  As $y_0 \to y_0'$ was arbitrary, we may conclude that for $n$ large enough,
 \begin{align}
 \begin{split}
   &\frac{1}{2} \sum_{i = 1}^P \bigg[ \sum_{m=1}^{P-i} \sum_{\{ i \to i + m\}} \kappa_k(t_n)
    x_{n}^{y_k}\bigg[ \ln \left(\frac{x_n^{y_k'}} {x_n^{y_k}}\right)
      +c_k\bigg] \\
      &\hspace{.2in} +\sum_{m=1}^{i-1} \sum_{\{i \to i - m\}} \kappa_k(t_n)
    x_{n}^{y_k}\bigg[ \ln \left(\frac{x_n^{y_k'}} {x_n^{y_k}}\right)
      +c_k\bigg]\bigg]<0
      \end{split}
      \label{eq:final2}
 \end{align}
which are the terms of \eqref{eq:2} plus the terms of \eqref{eq:3}.

  Combining the inequalities \eqref{eq:dom} and \eqref{eq:final2} shows that for $n$ large enough, the summation found on the left hand side of \eqref{eq:1}, or equivalently on the left hand side of \eqref{eq:bound1}, must be strictly negative. This is a contradiction with \eqref{eq:bound1} holding for all $n$.  Therefore, we must have that condition $C2$ holds.  
\end{proof}

  \begin{lemma}
  	 Let $\{\S,\C,\Re,\K(t)\}$, with $\S = \{S_1,\dots, S_N\}$, be a %weakly reversible, single linkage class,
  non-autonomous mass-action system with bounded kinetics and a single linkage class.  
  Suppose $x_0 \in \R^N_{>0}$ is such that $\phi(t,x_0)$ remains bounded and $\text{\em dist}(\phi(t,x_0),\partial \R^N_{\ge 0}) \to 0$ as $t \to \infty$.     Then, there does not exist a subsequence of times $t_n\to \infty$ such that $\C$ is partitioned along $x_n \eqdef \phi(t_n,x_0)$ in which $T_1$ consists of a union of linkage classes.
  \label{lem:no_union}
  \end{lemma}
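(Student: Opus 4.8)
The approach rests on one simple observation: since the network has a single linkage class $L_1$, we have $\C = L_1$, and so the only nonempty union of linkage classes is $\C$ itself. Consequently, if $\C$ is partitioned along some sequence $\{x_n\}$ in a way that $T_1$ is a union of linkage classes, then necessarily $T_1 = \C$, i.e.\ the partition has the single tier $T_1 = \C$ (so $P = 1$). For such a partition condition $(ii)$ of Definition \ref{def:partition} is vacuous, and condition $(i)$ merely records that all the monomials $x_n^{y}$, $y \in \C$, stay within a factor $C$ of one another.

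Suppose now, for contradiction, that such a subsequence $t_n \to \infty$ exists and put $x_n = \phi(t_n,x_0)$. Since the trajectory is bounded, I would first pass to a further subsequence (not relabelled) so that $x_n \to z$ for some $z \in \R^N_{\ge 0}$; passing to a subsequence preserves the partition, since its defining inequalities hold for every index $n$ (and $(ii)$ is vacuous in the single-tier case), so $\C$ is still partitioned along $\{x_n\}$ with the single tier $T_1 = \C$. Because $\text{dist}(\phi(t,x_0),\partial \R^N_{\ge 0}) \to 0$ and $t_n \to \infty$, the limit lies on the boundary, so $U \eqdef U(z) = \{i : z_i = 0\}$ is nonempty. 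Applying Theorem \ref{thm:conservation} to $\C$, the sequence $\{x_n\}$, its limit $z$, and the single-tier partition, we obtain a non-negative conservation relation $w \in \R^N_{\ge 0}$ respecting $(U,\{T_1\})$: its support is exactly $U$, and since $T_1 = \C$ the relation $w \cdot (y_j - y_{\ell}) = 0$ holds for \emph{every} pair of complexes $y_j, y_{\ell} \in \C$, so $w \cdot y$ takes one common value over all of $\C$.

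In particular $w \cdot (y_k' - y_k) = 0$ for every reaction $y_k \to y_k' \in \Re$, whence
\[
  \ddt \bigl( w \cdot \phi(t,x_0) \bigr) = \sum_k \kappa_k(t)\, \phi(t,x_0)^{y_k}\, w \cdot (y_k' - y_k) = 0 ,
\]
so $w \cdot \phi(t,x_0) = w \cdot x_0$ for all $t \ge 0$. This produces the contradiction: on one hand $w \cdot x_0 = \sum_{i \in U} w_i x_{0,i} > 0$ because $x_0 \in \R^N_{>0}$, $w_i > 0$ for $i \in U$, and $U \ne \emptyset$; on the other hand $w \cdot x_n \to \sum_{i \in U} w_i z_i = 0$ since $z_i = 0$ on $U$ and $w_i = 0$ off $U$, which is incompatible with $w \cdot x_n \equiv w \cdot x_0 > 0$. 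I do not expect a genuine obstacle here once Theorem \ref{thm:conservation} is available; the only points to get right are the reduction to a single tier and the routine passage to a convergent subsequence, after which the ``tier-respecting'' relation of Theorem \ref{thm:conservation} is automatically a linear first integral of the dynamics whose support is precisely the set of coordinates that vanish in the limit.
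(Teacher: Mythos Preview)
Your proof is correct and follows essentially the same approach as the paper's: reduce to the case $T_1=\C$ by the single-linkage-class assumption, pass to a convergent subsequence with boundary limit $z$, apply Theorem~\ref{thm:conservation} to obtain a non-negative conservation relation $w$ supported on $U(z)$, and then observe that $w\cdot\phi(t,x_0)$ is constant yet converges to zero along the subsequence. Your write-up is slightly more explicit in places (noting $P=1$, writing out $\ddt(w\cdot\phi)$), but the argument is the same.
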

  
  \begin{proof}
  	Note that in the one linkage class case $T_1$ can only consist of a union of linkage classes if $T_1 \equiv \C$. We suppose there is such a sequence of times, $t_n\to \infty$, such that $\C$ is partitioned along $x_n \eqdef \phi(t_n,x_0)$ with $T_1 \equiv \C$ (and there are no other tiers). By the boundedness of the sequence, we may consider a convergent subsequence with limit point $z$.  Let $U = U(z) = \{i \in \{1,\dots, N\} \ : \ z_i = 0\}.$  Note that $U$ is nonempty because the trajectory goes to the boundary.  Note that $\C$ is necessarily partitioned along this subsequence as well, with the same tiers.  By Theorem \ref{thm:conservation} there is a
  non-negative conservation relation $w \in \R^N_{\ge 0}$ that
  respects the pair $(U,\{T_i\})$.  
  
   Because the support of $w$ is $U \ne \emptyset$, and $\phi_i(t_n,x_0) \to 0$ for all $i \in U$, we have that $w\cdot \phi(t_n,x_0) \to 0$, as $n\to \infty$.  However, because $T_1 \equiv \C$, we also have that $w \cdot (y_k' - y_k) = 0$ \textit{for all} $y_k \to y_k' \in \Re$.  Thus, we see from \eqref{eq:main_general} that $w \cdot \phi(t,x_0) >0$ is constant in time $t$, contradicting that $w\cdot \phi(t_n,x_0) \to 0$ as $n \to \infty$.
  \end{proof}

Lemma \ref{lem:no_union} says that condition $C2$ of Lemma \ref{lem:main} can never hold in our setting, and so $C1$ must.  The following, final lemma will allow us to conclude that any trajectory satisfying such a family of Lyapunov functions must converge to a single point.

\begin{lemma}
  Let $\{\S,\C,\Re,\K(t)\}$, with $\S = \{S_1,\dots, S_N\}$, be a %weakly reversible, 
  non-autonomous system
  with bounded mass-action kinetics. 
  %Suppose $x_0 \in \R^N_{>0}$ is such that $\omega(\phi(t,x_0))\cap \partial \R^N_{> 0} \ne \emptyset$.   
  Suppose $x_0 \in \R^N_{>0}$ is such  that for any $\overline x \in \R^N_{>0}$, there exists a $T =
  T_{\overline x} >0$ such that $t>T$ implies
  \begin{equation*}
    \frac{d}{dt}V_{\overline x}(x(t)) < 0,
  \end{equation*}
  where  $x(t) = \phi(t,x_0)$ is the solution to the system with $x(0) = x_0$ and kinetics $\K(t)$.  Then $\omega(\phi(\cdot,x_0))$ is a single point.
  \label{lem:single}
\end{lemma}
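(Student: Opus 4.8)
The plan is to leverage the one non-standard feature of the hypothesis --- that $V_{\overline x}$ decreases eventually along the trajectory for \emph{every} reference point $\overline x \in \R^N_{>0}$, not just one --- to pin down $\omega(\phi(\cdot,x_0))$ to a single point. First I would check that the hypothesis already forces boundedness, so that $\omega(\phi(\cdot,x_0))$ is a nonempty compact subset of $\R^N_{\ge 0}$. Fixing any $\overline x \in \R^N_{>0}$ and writing $V_{\overline x}(x) = \sum_{i=1}^N g_i(x_i)$ with $g_i(s) = s\ln s - s\ln \overline x_i - s + \overline x_i \ge 0$, each $g_i$ is nonnegative and satisfies $g_i(s) \to \infty$ as $s\to\infty$, so every sublevel set $\{x : V_{\overline x}(x)\le M\}$ is bounded. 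Since $\frac{d}{dt}V_{\overline x}(x(t)) < 0$ for $t > T_{\overline x}$, the map $t \mapsto V_{\overline x}(x(t))$ is eventually non-increasing, hence $x(t)$ lies in a fixed bounded sublevel set for all large $t$; together with continuity of the trajectory on the compact interval $[0,T_{\overline x}]$ this gives boundedness of $\phi(\cdot,x_0)$. (As usual for mass-action kinetics with positive initial data, $x(t)\in\R^N_{>0}$ for all $t\ge 0$, so the derivative of $V_{\overline x}(x(t))$ is meaningful.)

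The crucial step is then the following. Fix $\overline x \in \R^N_{>0}$; since $V_{\overline x}(x(t))\ge 0$ and is eventually strictly decreasing, it converges to some limit $L(\overline x)\ge 0$ as $t\to\infty$. For an arbitrary $p \in \omega(\phi(\cdot,x_0))$, pick $t_n\to\infty$ with $\phi(t_n,x_0)\to p$; by continuity of the continuously-extended function $V_{\overline x}$ on $\R^N_{\ge 0}$ we get $V_{\overline x}(p) = \lim_n V_{\overline x}(\phi(t_n,x_0)) = L(\overline x)$. Because $p$ was an arbitrary $\omega$-limit point, this yields $V_{\overline x}(p) = V_{\overline x}(q)$ for every pair $p,q \in \omega(\phi(\cdot,x_0))$, and this holds \emph{for every} $\overline x \in \R^N_{>0}$.

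Finally I would exploit the freedom in $\overline x$. Fix $p,q \in \omega(\phi(\cdot,x_0))$. Using the convention $0\ln 0 = 0$, a direct expansion of the definition \eqref{eq:Lyapunov} gives
\begin{equation*}
  V_{\overline x}(p) - V_{\overline x}(q) = \left(\sum_{i=1}^N \big[p_i\ln p_i - q_i\ln q_i - (p_i - q_i)\big]\right) - \sum_{i=1}^N (p_i - q_i)\ln \overline x_i ,
\end{equation*}
which, as $\overline x$ ranges over $\R^N_{>0}$, is an affine function of $(\ln\overline x_1,\dots,\ln\overline x_N) \in \R^N$ with linear part $-\sum_i (p_i - q_i)\ln\overline x_i$. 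By the previous paragraph this expression is identically zero in $(\ln\overline x_1,\dots,\ln\overline x_N)$, which forces $p_i = q_i$ for every $i$, i.e. $p = q$. Hence $\omega(\phi(\cdot,x_0))$ contains at most one point, and being nonempty it is a single point.

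I expect the only real subtlety to be the bookkeeping around boundedness and the continuous extension of $V_{\overline x}$ to $\partial\R^N_{\ge 0}$, which is what lets $\omega$-limit points on the boundary be treated on the same footing as interior ones. The conceptual heart --- that letting $\overline x$ range over all of $\R^N_{>0}$ produces, through the term linear in $\ln\overline x$, exactly enough constraints to collapse $\omega(\phi(\cdot,x_0))$ to a point, something a single Lyapunov function $V_{\overline x}$ cannot do unless $\overline x$ is already known to be the limit --- is precisely the leverage the hypothesis is designed to supply.
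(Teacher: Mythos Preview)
Your proposal is correct and follows essentially the same approach as the paper: both establish boundedness from the Lyapunov functions, deduce that $V_{\overline x}$ is constant on $\omega(\phi(\cdot,x_0))$ for every $\overline x\in\R^N_{>0}$, and then exploit the affine dependence of $V_{\overline x}(p)-V_{\overline x}(q)$ on $\ln\overline x$ to force $p=q$. The paper carries out the last step by subtracting two instances (with $\overline x_1$ and $\overline x_2$) to isolate $(z_1-z_2)\cdot(\ln\overline x_2-\ln\overline x_1)=0$, whereas you observe directly that an affine function of $\ln\overline x$ that vanishes identically must have zero linear part; these are the same argument.
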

  
\begin{proof}
  Note that the trajectory remains bounded because each $V_{\overline x}(x(t))$ does.  Also, we have that for any $\overline x \in \R^N_{>0}$ there exists a $c_{\overline x} \ge 0$ such that
  \begin{equation*}
  	V_{\overline x}(x(t)) \to c_{\overline x}, \quad \text{as } t \to \infty,
  \end{equation*}
  where the non-negativeness of $c_{\overline x}$ follows by the fact that $V_{\overline x}(x) \ge 0$ for all $x \in \R^N_{\ge 0}$.
  The boundedness of $x(t)$ implies there is at least one $\omega$-limit point of the trajectory. The question now is: can there be more than one?  Suppose so.   That is, we assume the existence of $z_1,z_2 \in  \omega(\phi(\cdot,x_0))$ with $z_1 \ne z_2$.
  
%  So long as $\omega(\phi(t,x_0))$ is not identically equal to the origin (which we know it is not as we are assuming it contains more than one point), we may select  $z_1, z_2 \in \omega(\phi(t,x_0))$ with $z_1 \ne z_2$ and $z_{1,i} = 0 \iff z_{2,i} = 0$.  This follows from the fact that $\omega(\phi(t,x_0))$ is connected.  After reordering the indices, let $\{1,\dots,n\}$ denote the indices for which $z_{1,i} \ne z_{2,i}$.  
  
  Note that for any $\overline x\in \R^N_{>0}$, $V_{\overline x}(z_1) = V_{\overline x}(z_2) = c_{\overline x}$, where if $z_i \in \partial \R^N_{\ge 0}$ we define $V_{\overline x}$ on the boundary via its continuous extension to the boundary.  Let $\overline x_1 , \overline x_2 \in \R^N_{>0}$ be arbitrary.  Then, after some algebra we have
  \begin{align*}
  	0 &= V_{\overline x_1}(z_1) - V_{\overline x_1}(z_2) - (V_{\overline x_2}(z_1) - V_{\overline x_2}(z_2))\\
%	&= \sum_{i = 1}^N z_{1,i} (\ln(z_{1,i}) -\ln(\overline x_{1,i}) - 1) -  \sum_{i = 1}^N z_{2,i} (\ln(z_{2,i}) -\ln(\overline x_{1,i}) - 1)\\
%	&\hspace{.3in} - \left(\sum_{i = 1}^N z_{1,i} (\ln(z_{1,i}) -\ln(\overline x_{2,i}) - 1) -  \sum_{i = 1}^N z_{2,i} (\ln(z_{2,i}) -\ln(\overline x_{2,i}) - 1) \right)\\
	%&= \sum_{i = 1}^n z_{1,i}(\ln(\overline x_{2,i}) - \ln(\overline x_{1,i})) - \sum_{i = 1}^n z_{2,i}(\ln(\overline x_{2,i}) - \ln(\overline x_{1,i}))\\
	&= (z_1 - z_2)\cdot \left( \ln(\overline x_{2}) - \ln(\overline x_{1}) \right).
  \end{align*}
  But, $\overline x_1, \overline x_2\in \R^N_{>0}$, and hence $(\ln \overline x_2 - \ln \overline x_1)\in \R^N$, were
  arbitrary.  Thus, $z_1 = z_2$.
\end{proof}

  We now have our main result.
  
\begin{theorem}
  Let $\{\S,\C,\Re,\K\}$, with $\S = \{S_1,\dots, S_{|\S|}\}$, be a weakly reversible, single linkage class chemical
  reaction network with mass-action kinetics.  
   We assume that for $x_0 \in \R^{|\S|}_{>0}$ the trajectory $\phi(t,x_0)$ satisfies the following two conditions
\begin{enumerate}
  \item $\phi(t,x_0)$ is bounded (in $t$), and
  \item $\omega(\phi(\cdot,x_0))$ is either completely contained in $\partial
    \R^{|\S|}_{\ge 0}$ or completely contained within the interior of $ \R^{|\S|}_{>
      0}$.
  \end{enumerate}
  Then   $\omega(\phi(\cdot,x_0)) \cap \partial \R^{|\S|}_{\ge 0} = \emptyset$, and the trajectory is persistent. 
  \label{thm:main}
\end{theorem}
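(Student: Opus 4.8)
The plan is to argue by contradiction, push the trajectory through the three preceding lemmas, and close with a conservation-law argument. Throughout write $N = |\S|$. Suppose $\omega(\phi(\cdot,x_0))\cap\partial\R^N_{\ge 0}\neq\emptyset$. By hypothesis~2 this forces $\omega(\phi(\cdot,x_0))\subset\partial\R^N_{\ge 0}$ entirely, and together with the boundedness of the trajectory (hypothesis~1) this gives $\mathrm{dist}(\phi(t,x_0),\partial\R^N_{\ge 0})\to 0$ as $t\to\infty$ (if not, by boundedness some subsequence of $\phi(t_n,x_0)$ would converge to an $\omega$-limit point at positive distance from the boundary). Finally note that the autonomous mass-action system $\{\S,\C,\Re,\K\}$ is in particular a non-autonomous mass-action system with bounded kinetics --- take the constant rates $\kappa_k(t)\equiv\kappa_k$ and any $\eta$ with $0<\eta<\min_k\kappa_k\le\max_k\kappa_k<1/\eta$ --- and it is weakly reversible with a single linkage class, so Lemmas~\ref{lem:main}, \ref{lem:no_union}, and \ref{lem:single} all apply to it.

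Now apply Lemma~\ref{lem:main}: the trajectory, being bounded and approaching the boundary, satisfies condition $C1$ or condition $C2$. Lemma~\ref{lem:no_union}, which applies precisely because the network has a single linkage class, rules out $C2$ (in the one-linkage-class case $T_1$ a union of linkage classes means $T_1=\C$, impossible for a trajectory running to the boundary). Hence $C1$ holds: for every $\overline x\in\R^N_{>0}$ there is a $T_{\overline x}>0$ with $\frac{d}{dt}V_{\overline x}(\phi(t,x_0))<0$ for $t>T_{\overline x}$. Feeding this family of Lyapunov inequalities into Lemma~\ref{lem:single} shows $\omega(\phi(\cdot,x_0))$ is a single point $z$, and by the previous paragraph $z\in\partial\R^N_{\ge 0}$; thus $\phi(t,x_0)\to z$.

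The remaining task --- and this is the main obstacle --- is to contradict $\phi(t,x_0)\to z\in\partial\R^N_{\ge 0}$. Let $U=U(z)=\{i:z_i=0\}\neq\emptyset$. Pick any $t_n\to\infty$ and, by Lemma~\ref{lem:partition} applied to $\C$ and the points $\phi(t_n,x_0)\to z$, pass to a subsequence along which $\C$ is partitioned with tiers $\{T_i\}$. By Theorem~\ref{thm:conservation} there is a non-negative conservation relation $w\in\R^N_{\ge 0}$ respecting $(U,\{T_i\})$; in particular $\mathrm{supp}(w)=U$ and $w\cdot(y_j-y_\ell)=0$ whenever $y_j,y_\ell$ share a tier. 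Since $\mathrm{supp}(w)=U$ and $\phi_i(t,x_0)\to z_i=0$ for $i\in U$, we get $w\cdot\phi(t,x_0)\to 0$. On the other hand, from \eqref{eq:main},
\[
\frac{d}{dt}\bigl(w\cdot\phi(t,x_0)\bigr)=\sum_k\kappa_k\,\phi(t,x_0)^{y_k}\,w\cdot(y_k'-y_k),
\]
and every reaction internal to a single tier drops out --- this is exactly what ``$w$ respects the tiers'' buys us. Because the network is weakly reversible with one linkage class and $T_1\subsetneq\C$, some reaction leaves $T_1$; and because $\phi(t,x_0)\to z$, the monomials $\phi(t,x_0)^{y_k}$ with $y_k\in T_1$ dominate all the others along the trajectory. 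A sign analysis of these dominant surviving terms --- $w\cdot(y_k'-y_k)$ is constant on $T_1\times T_j$, and the top-tier complexes are precisely those of largest growth (e.g.\ the $y$ with $z^y>0$, which then carry $w$-weight $0$), so the top-tier-exiting reactions all move $w\cdot\phi$ strictly in one direction --- shows that $w\cdot\phi(t,x_0)$ is eventually monotone with derivative bounded away from $0$, so it cannot converge to $0$, a contradiction. This last step is organized most cleanly through the reduced network: by Lemmas~\ref{lemma:WR} and \ref{lem:linkage} the reduced system $\{\S_U,\C_U,\Re_U,\K_U(t)\}$ is weakly reversible with a single linkage class and (for $t$ large, since $\phi(t,x_0)|_{U^c}$ stays in a compact subset of $\R^{|U^c|}_{>0}$) has bounded kinetics, while the reduced trajectory $\phi(t,x_0)|_U$ converges to the origin; Theorem~\ref{thm:conservation} then yields a conservation relation of full support, and when the zero complex survives in $\C_U$ it alone constitutes the top tier, carries $w$-weight $0$, and its outgoing reactions uniformly increase $w\cdot\phi(t,x_0)$, delivering the contradiction. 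The heart of the difficulty is exactly this monomial-domination sign estimate, a streamlined reprise of the comparisons in the proof of Lemma~\ref{lem:main}.

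Therefore $\omega(\phi(\cdot,x_0))\cap\partial\R^N_{\ge 0}=\emptyset$. Since the trajectory is bounded, this is equivalent (as noted right after the definition of the $\omega$-limit set) to $\liminf_{t\to\infty}\phi_i(t,x_0)>0$ for every $i$, i.e.\ the trajectory is persistent.
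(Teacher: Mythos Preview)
Your argument is sound through the point where you conclude, via Lemmas~\ref{lem:main}, \ref{lem:no_union}, and \ref{lem:single} applied to the original system, that $\phi(t,x_0)$ converges to a single boundary point $z$. The gap is in the final contradiction. Your ``sign analysis'' of $\frac{d}{dt}(w\cdot\phi)$ only goes through in the special case where some complex has no support in $U$ --- equivalently, where the zero complex survives in $\C_U$. You acknowledge this yourself by writing ``when the zero complex survives in $\C_U$'' and ``e.g.\ the $y$ with $z^y>0$,'' but you never handle the complementary case. If every complex of $\C$ has support meeting $U$, then $\vec 0\notin\C_U$, every reduced monomial $(\phi|_U)^{y|_U}$ tends to $0$, the top-tier complexes carry \emph{positive} $w$-weight (since $w$ has full support and these complexes are nonzero), and the sign of $w\cdot(y_k'-y_k)$ for a top-tier-exiting reaction is completely undetermined. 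Your claim that ``$w\cdot(y_k'-y_k)$ is constant on $T_1\times T_j$'' is not true in general, and the eventual-monotonicity conclusion fails. This case is not exotic: for instance, the single-linkage-class network $S_1+S_3\rightleftarrows S_2+S_3$ with $U=\{1,2\}$ reduces to $S_1\rightleftarrows S_2$, which has no zero complex.

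The paper sidesteps this entirely by a different closing move. It applies Lemmas~\ref{lem:main}--\ref{lem:single} not to the original system but to the \emph{reduced} system $\{\S_U,\C_U,\Re_U,\K_U(t)\}$, where $U$ is the set of all indices whose coordinate vanishes at some $\omega$-limit point. The reduced trajectory then converges to the origin of $\R^{|U|}$, and condition $C1$ gives $\frac{d}{dt}V_{\overline x}(x|_U(t))<0$ eventually. The contradiction comes from the elementary observation that the origin is a strict local \emph{maximum} of $V_{\overline x}$ on $\R^{|U|}_{\ge 0}$ (each summand $s\mapsto s(\ln s-\ln\overline x_i-1)+\overline x_i$ has derivative $-\infty$ at $0^+$), so a strictly decreasing $V_{\overline x}(x|_U(t))$ cannot converge to $V_{\overline x}(\vec 0)$. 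No sign analysis of cross-tier reactions is needed, and the argument is insensitive to whether $\vec 0\in\C_U$. Your route can be completed by the same device: once you have $\phi(t,x_0)\to z$, project to $U=\{i:z_i=0\}$, re-apply Lemmas~\ref{lem:main} and \ref{lem:no_union} to the reduced system to obtain $C1$ there, and finish with the local-maximum observation.
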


\begin{remark}
	Note that the conclusion of the theorem guarantees that $\omega(\phi(\cdot,x_0))$ is completely contained within the interior of the strictly positive orthant.  Said differently, $\omega(\phi(\cdot,x_0))$ \textit{can not} be contained within $\partial \R^{|\S|}_{\ge 0}$.
\end{remark}

\begin{proof}
  Suppose, in order to find a contradiction, that for this $x_0$ there is at least one $z \in \omega(\phi(\cdot,x_0))\cap \partial \R^{|\S|}_{\ge 0}$.   Let 
\begin{equation*}
   U =  \{i \in \{1,\dots,|\S|\} \, : \,  z_i = 0 \text{ for some } z \in \omega(\phi(\cdot,x_0))\},
   \end{equation*}
   which is nonempty as there is at least one $z \in \partial \R^{|\S|}_{\ge 0}$.
 That is, these are all the indices for the species whose concentrations approach zero along some subsequence of times for this specific, fixed trajectory.  Therefore, and equivalently, $i \in U$ if and only if
 \begin{equation*}
     \liminf_{t\to \infty} \phi_i(t,x_0) = 0 \quad \text{ and } \quad \limsup_{t\to \infty} \phi_i(t,x_0) < \infty,
  \end{equation*}
  where the second fact follows from the boundedness of trajectories, whereas for $j \notin U$ we have 
  \begin{equation}
  0< \liminf_{t\to \infty} \phi_j(t,x_0)\le  \limsup_{t\to \infty} \phi_j(t,x_0) < \infty.
  \label{eq:other_bounds}
  \end{equation}
  
   Let $\{\S_U,\C_U,\Re_U\}$ denote the reduced reaction network of $\{\S,\C,\Re\}$ associated with $U$ (see Definition \ref{def:reduced}), and let $\K(t) = \K_U(t)$ denote the projected dynamics (see Section \ref{sec:reduced_networks}), with $\kappa_k(t)$ denoting the non-autonomous variables defined via \eqref{eq:new_rates} that take the place of the rate constants in standard mass-action kinetics.  It is important to note that by \eqref{eq:other_bounds} and the definition of the $\kappa_k(t)$'s given in \eqref{eq:new_rates}, we have the existence of an $\eta>0$ such that
 \begin{equation}
 	\eta < \kappa_k(t) < 1/\eta,
	\label{eq:important_bound}
 \end{equation}
 for all $t \ge 0$ and all  $k \in \{1,\dots, |\Re|_U|\}$.  That is, $\{\S_U,\C_U,\Re_U,\K(t)\}$ is a non-autonomous system with bounded mass-action kinetics.  Another way to see this fact is just to note that the chemical species whose concentrations are uniformly bounded from above and below, those $j \notin U$, have been incorporated into the $\kappa_k(t)$, thereby yielding a bound like \eqref{eq:important_bound}.  By Lemmas \ref{lem:linkage} and \ref{lemma:WR}, the reduced network $\{\S_U,\C_U,\Re_U\}$ is weakly reversible and has only one linkage class.
 
 We let $|\S_U| = N$ and denote by $x(t)\in \R^N_{>0}$ the solution to the reduced dynamical system for this specific trajectory.  By condition 2., above, which pertains to the original system, and by the construction of $U$, the set of $\omega$-limit points of the trajectory of the reduced system must exist on $\partial \R^N_{\ge 0}$.  Combining Lemmas \ref{lem:linkage}, \ref{lem:main}, \ref{lem:no_union}, and \ref{lem:single} shows that the set of $\omega$-limit points of the trajectory of the reduced system, $x(t)$, must consist of a single point.  By construction of $U$, this point must be the origin $\vec 0 \in \R^N$, as otherwise there is an $i \in  U$ for which $\liminf_{t \to \infty} x_i(t) > 0$, a contradiction with the definition of $U$.  However, we also know by Lemmas \ref{lem:main}, \ref{lem:no_union}, and \ref{lem:single} that $\frac{d}{dt} V_{\overline x}(x(t)) < 0$ for $t$ large enough, where $\overline x\in \R^N_{>0}$ is arbitrary.  Therefore, because the origin is a local maximum of $V_{\overline x}$, we can not have that $x(t) \to \vec 0\in \R^N$.
\end{proof}

The following corollary, which was the main goal of the paper, states that the Global Attractor Conjecture holds in the single linkage class case. 

\begin{corollary}
	Let $\{\S,\C,\Re,\K \}$ denote a complex-balanced system with one linkage class.   Then, any complex-balanced equilibrium contained in the interior of a positive compatibility
class  is a global attractor of the interior of that positive class.
	\label{cor:GAC}
\end{corollary}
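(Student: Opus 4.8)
The plan is to derive this as a direct application of Theorem \ref{thm:main} by checking that every trajectory of a complex-balanced, single linkage class system automatically satisfies the two hypotheses of that theorem. First I would recall that a complex-balanced system is necessarily weakly reversible, so $\{\S,\C,\Re\}$ is a weakly reversible mass-action network with a single linkage class, matching the structural hypotheses of Theorem \ref{thm:main}. Fix a positive stoichiometric compatibility class and let $c$ be the unique complex-balanced equilibrium in its interior, whose existence and uniqueness are guaranteed by the results of Horn and Jackson recalled in Section \ref{sec:conjectures}. Take an arbitrary $x_0 \in \R^{|\S|}_{>0}$ in the interior of this class; it suffices to show $\phi(t,x_0) \to c$.

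The main tool is the Horn--Jackson Lyapunov function $V_c$ from \eqref{eq:Lyapunov} with $\overline x = c$. As recalled in Section \ref{sec:background}, $t \mapsto V_c(\phi(t,x_0))$ is nonincreasing and bounded below by $0$; since $V_c$ is continuous and proper on $\R^{|\S|}_{\ge 0}$ (the terms $x_i\ln x_i$ force $V_c(x)\to\infty$ as $|x|\to\infty$) and the compatibility class is closed, each sublevel set of $V_c$ within that class is compact, so $\phi(t,x_0)$ is bounded, verifying hypothesis~1. For hypothesis~2 I would run the standard LaSalle-type argument: $V_c(\phi(t,x_0))$ converges to some $c_0\ge 0$, hence $V_c \equiv c_0$ on the nonempty, compact, invariant set $\omega(\phi(\cdot,x_0))$; if $p\in\omega(\phi(\cdot,x_0))$ were an interior point, invariance together with constancy of $V_c$ would force $\frac{d}{dt}V_c = 0$ along the orbit through $p$, making $p$ a complex-balanced equilibrium in the interior of the trajectory's class, so $p=c$. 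Thus $\omega(\phi(\cdot,x_0))\cap\R^{|\S|}_{>0}\subseteq\{c\}$. Moreover $c\in\omega(\phi(\cdot,x_0))$ would give $c_0 = V_c(c) = 0$, and since $V_c>0$ off $c$ on the closed orthant (on a face $\{x_i=0,\, i\in I\}$ the minimum of $V_c$ is $\sum_{i\in I}c_i>0$), this forces $\omega(\phi(\cdot,x_0))=\{c\}$. Hence either $\omega(\phi(\cdot,x_0))=\{c\}$, contained in the interior, or $c\notin\omega(\phi(\cdot,x_0))$ and $\omega(\phi(\cdot,x_0))\subseteq\partial\R^{|\S|}_{\ge 0}$; in both cases hypothesis~2 holds.

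Finally I would invoke Theorem \ref{thm:main}, which yields $\omega(\phi(\cdot,x_0))\cap\partial\R^{|\S|}_{\ge 0}=\emptyset$, ruling out the second alternative and leaving $\omega(\phi(\cdot,x_0))=\{c\}$; since the trajectory is bounded, this means $\phi(t,x_0)\to c$. As $x_0$ was an arbitrary interior point of an arbitrary positive compatibility class, $c$ is a global attractor of the interior of its class. All the genuine difficulty has already been absorbed into Theorem \ref{thm:main}; the only mildly delicate points here are the LaSalle invariance argument establishing the $\omega$-limit dichotomy and the observation that boundedness still follows even though $V_c$ is merely finite (not $+\infty$) on $\partial\R^{|\S|}_{\ge 0}$, and I expect neither to present a real obstacle.
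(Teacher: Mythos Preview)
Your proposal is correct and follows essentially the same route as the paper: verify that complex-balanced trajectories satisfy the two hypotheses of Theorem~\ref{thm:main}, invoke that theorem to rule out boundary $\omega$-limit points, and then use the standard Horn--Jackson Lyapunov argument to conclude convergence to $c$. The paper simply cites \cite{FeinbergLec79} and the discussion in Section~\ref{sec:background} for conditions~1 and~2, whereas you spell out the boundedness-from-properness and LaSalle/dichotomy arguments explicitly; the substance is the same.
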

\begin{proof}
	Trajectories of complex-balanced systems satisfy conditions 1 and 2 in the statement of Theorem \ref{thm:main}, \cite{FeinbergLec79}.  The result then follows by the discussion in Section \ref{sec:background} after the statement of the Global Attractor Conjecture.
\end{proof}

In particular, if $\{\S,\C,\Re,\K \}$ is a weakly reversible, deficiency zero system with a  single linkage class, then the conclusion of the Global Attractor Conjecture holds.

	Note that the single linkage class assumption in Theorem \ref{thm:main}, and hence Corollary \ref{cor:GAC}, was only used in conjunction with Theorem \ref{thm:conservation} in the proof of Lemma \ref{lem:no_union} to guarantee that the top tier, $T_1$, could not consist of a union of linkage classes.  If it can be guaranteed in any other way that the top tier, in the construction detailed in the previous lemmas, can not consist of a union of linkage classes, then the conclusions of Theorem \ref{thm:main} and Corollary \ref{cor:GAC}, that complex-balanced equilibria are global attractors of their positive classes, will still hold.  Also, note that if it can be shown that condition 2 of Theorem \ref{thm:main} is always satisfied by weakly reversible networks with mass-action kinetics, something we believe to be true, then the Persistence Conjecture, as stated in Section \ref{sec:background} of this paper, will also be proven in the single linkage class case by the arguments in this paper.

	For completeness, we state the following corollary to the proof of Theorem \ref{thm:main}, which points out that the rate constants $\kappa_k$ are permitted to be bounded functions of time before the projection.
	\begin{corollary}
  Let $\{\S,\C,\Re,\K(t)\}$, with $\S = \{S_1,\dots, S_{|\S|}\}$, be a weakly reversible, single linkage class chemical
  reaction network with bounded mass-action kinetics.  
   We assume that for $x_0 \in \R^{|\S|}_{>0}$ the trajectory $\phi(t,x_0)$ satisfies the following two conditions:
\begin{enumerate}
  \item $\phi(t,x_0)$ is bounded (in $t$), and
  \item $\omega(\phi(\cdot,x_0))$ is either completely contained in $\partial
    \R^{|\S|}_{\ge 0}$ or completely contained within the interior of $ \R^{|\S|}_{>
      0}$.
  \end{enumerate}
  Then   $\omega(\phi(\cdot,x_0)) \cap \partial \R^{|\S|}_{\ge 0} = \emptyset$, and the trajectory is persistent. 
  \label{cor:main}
\end{corollary}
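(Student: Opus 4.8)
The plan is to observe that the proof of Theorem \ref{thm:main} already goes through with essentially no change, since every auxiliary lemma it invokes --- Lemmas \ref{lem:main}, \ref{lem:no_union}, and \ref{lem:single} --- is stated for non-autonomous mass-action systems with bounded kinetics, never specifically for autonomous ones. So the argument I would give follows the same skeleton. First I would suppose, for contradiction, that there is a $z \in \omega(\phi(\cdot,x_0)) \cap \partial \R^{|\S|}_{\ge 0}$, and define $U$ to be the set of indices $i$ with $z_i = 0$ for some $z \in \omega(\phi(\cdot,x_0))$; by condition 2 this is exactly the set of species with $\liminf_{t\to\infty}\phi_i(t,x_0) = 0$, while for $j \notin U$ one still has $0 < \liminf_{t\to\infty}\phi_j(t,x_0)\le \limsup_{t\to\infty}\phi_j(t,x_0) < \infty$ as in \eqref{eq:other_bounds}.

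The only point where a genuine (if minor) verification is required is in forming the reduced network $\{\S_U,\C_U,\Re_U\}$ together with its projected dynamics. The sole difference from the autonomous case is that in formula \eqref{eq:new_rates} the original rate constants $\kappa_i$ are themselves bounded functions of time, $\eta < \kappa_i(t) < 1/\eta$. Since the projected rate function of each reaction $y_k|_U \to y_k'|_U$ in $\Re_U$ is a finite, positive-coefficient sum of terms $\kappa_i(t)\big(x(t)|_{U^c}\big)^{z_i|_{U^c}}$, and every factor $x_j(t)$ with $j \notin U$ is bounded above and bounded away from zero by \eqref{eq:other_bounds}, each such sum is again bounded above and below by positive constants. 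Hence $\{\S_U,\C_U,\Re_U,\K_U(t)\}$ is a non-autonomous mass-action system with bounded kinetics, i.e.\ a bound of the form \eqref{eq:important_bound} holds, exactly as in the proof of Theorem \ref{thm:main}.

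From that point the proof is a verbatim replay. By Lemmas \ref{lem:linkage} and \ref{lemma:WR} the reduced network is weakly reversible with a single linkage class, and by condition 2 together with the construction of $U$ its trajectory $x(t) \in \R^N_{>0}$ (with $N = |\S_U|$) has $\omega$-limit set contained in $\partial\R^N_{\ge 0}$. Lemma \ref{lem:main} then gives that either C1 or C2 holds; Lemma \ref{lem:no_union} rules out C2 in the single linkage class setting; so C1 holds, and Lemma \ref{lem:single} forces $\omega(\phi(\cdot,x_0))$ for the reduced trajectory to be a single point, which by the definition of $U$ must be the origin of $\R^N$. But C1 asserts $\frac{d}{dt}V_{\overline x}(x(t)) < 0$ for all large $t$ and every $\overline x \in \R^N_{>0}$, contradicting the fact that the origin is a local maximum of $V_{\overline x}$. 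This contradiction establishes the corollary. The main obstacle, such as it is, is purely the bookkeeping in the second step: confirming that permitting time-dependent but uniformly bounded $\kappa_k(t)$ does not destroy the uniform bounds on the projected rates; once that is checked there is nothing new to prove.
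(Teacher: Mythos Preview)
Your proposal is correct and matches the paper's own proof, which simply states that the argument is identical to that of Theorem \ref{thm:main}. Your additional verification that the projected rates remain uniformly bounded when the original $\kappa_k$ are already time-dependent is exactly the minor bookkeeping the paper alludes to.
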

\begin{proof}
The proof is exactly the same as the proof of Theorem \ref{thm:main}.
\end{proof}

\section{Example}
\label{sec:example}

We present one example to demonstrate the ease with which the main results can be applied.  Consider the system with reaction network 
\begin{equation*}
\begin{array}{c}
	S_1 + S_2\ \overset{\kappa_1}{\text{{\Large $\rightarrow$}}} \ 3S_1\\
	\text{\small $\kappa_4$} \text{{\Large $\uparrow$}} \hfill \text{{\Large $\downarrow$}} \text{\small $\kappa_2$}\\
	2S_2 \ \underset{\kappa_3}{\text{\Large $\leftarrow$}} \ 2S_1 + S_3
\end{array},
\end{equation*}
which was recently used as an example of a system whose persistence was beyond the scope of known theory \cite{JohnstonSiegel2011}.  This network has four complexes, one linkage class, and the dimension of the stoichiometric subspace can be checked to be three.  Thus, the deficiency is zero.  As the network is clearly weakly reversible, the Deficiency Zero Theorem implies that, regardless of the choice of $\kappa_k$, the system is complex-balanced.  Theorem \ref{thm:main} and Corollary \ref{cor:GAC} tell us that the system is persistent, and  that each of the complex-balanced equilibria are global attractors of their positive stoichiometric compatibility classes.  Thus, the long term behavior of the system is now completely known.

\vspace{.1in}	
	\noindent \textbf{Acknowledgements.}  Thanks to Masanori Koyama for a thorough reading of an early draft that found an error in the proof of Lemma \ref{lem:main}.  Also thanks to two incredibly meticulous reviewers who helped with the presentation substantially.

\bibliographystyle{amsplain} \bibliography{GAC_ONE}
  
\end{document}